\lstdefinelanguage{Sage}[]{Python}
{morekeywords={False,sage,True},sensitive=true}
\definecolor{dblackcolor}{rgb}{0.0,0.0,0.0}
\definecolor{dbluecolor}{rgb}{0.01,0.02,0.7}
\definecolor{dgreencolor}{rgb}{0.2,0.4,0.0}
\definecolor{dgraycolor}{rgb}{0.30,0.3,0.30}
\newcommand{\BB}{\mathcal{B}}
\newcommand{\bd}{{\boldsymbol{\delta}}}
\newcommand{\CB}{\mathscr{B}}
\newcommand{\cc}{\bm{c}}
\newcommand{\CC}{\mybb{C}}
\newcommand{\cl}{\mathrm{cl}}
\newcommand{\im}{\mathrm{Im}}
\newcommand{\ii}{{\mathbf i}}
\newcommand{\KK}{\mathcal{K}}
\newcommand{\GG}{\mathcal{G}}
\newcommand{\M}{\mathscr{M}}
\newcommand{\mult}{\operatorname{mult}}
\newcommand{\mybb}[1]{\mathbf{#1}}
\newcommand{\N}{\mathscr{N}}
\newcommand{\pp}{\mathfrak{p}}
\newcommand{\PP}{\mathcal{P}}
\newcommand{\qq}{\mathfrak{q}}
\newcommand{\re}{\mathrm{Re}}
\newcommand{\seg}{\operatorname{seg}}
\newcommand{\wt}{{\rm wt}}
\newcommand{\YY}{\mathcal{Y}}
\newcommand{\zz}{{\bm z}}
\newcommand{\ZZ}{\mybb{Z}}
\newcommand{\GYW}[1]{
 \foreach \x [count=\s from 0] in {#1}{
   \foreach \y [count=\t from 0] in \x {
     \node[font=\scriptsize] at (-\t,\s) {$\y$};
     \draw (-\t+.5,\s+.5) to (-\t-.5,\s+.5);
     \draw (-\t+.5,\s-.5) to (-\t-.5,\s-.5);
     \draw (-\t-.5,\s-.5) to (-\t-.5,\s+.5);
   }
 \draw[-,thick] (.5,\s+1) to (.5,-.5) to (-\t-1,-.5);
 }
}
\newcommand{\wall}[1]{
	\begin{tikzpicture}[baseline=20,scale=.45]
		\GYW{#1}
	\end{tikzpicture}
}
\theoremstyle{plain}
\newtheorem{thm}{Theorem}[section]
\newtheorem{lemma}[thm]{Lemma}
\newtheorem{prop}[thm]{Proposition}
\newtheorem{cor}[thm]{Corollary}
\theoremstyle{definition}
\newtheorem{dfn}[thm]{Definition}
\newtheorem{ex}[thm]{Example}
\newtheorem{remark}[thm]{Remark}
\newtheorem{alg}[thm]{Algorithm}
\numberwithin{equation}{section}
\numberwithin{figure}{section}
\numberwithin{table}{section}
\title[Combinatorics of the Gindikin-Karpelevich Formula in affine $A$ type]{A combinatorial description of the affine Gindikin-Karpelevich formula of type $A_n^{(1)}$}
\author{Seok-Jin Kang$^{1}$} 
\thanks{$^1$ This work was supported by NRF Grant \# 2011-0017937 and NRF Grant \# 2011-0027952.} 
\address{Department of Mathematical Sciences and Research Institute of Mathematics \\ Seoul National University \\ Gwanak-ro 599\\ Gwanak-gu\\ Seoul 151-747 \\ Korea} 
\email{sjkang@math.snu.ac.kr} 
\author{Kyu-Hwan Lee}
\address{Department of Mathematics \\ University of Connecticut \\ 196 Auditorium Road, Unit 3009 \\ Storrs, CT  06269-3009 \\ USA}
\email{khlee@math.uconn.edu}
\author{Hansol Ryu$^{2}$} 
\address{Department of Mathematical Sciences \\ Seoul National University \\ Gwanak-ro 599\\ Gwanak-gu\\ Seoul 151-747 \\ Korea} 
\thanks{$^{2}$ This work was supported by BK21 Mathematical Sciences Division and NRF Grant \# 2011-0027952.} 
\email{sol8586@snu.ac.kr} 
\author{Ben Salisbury$^{3}$}
\address{Department of Mathematics \\ Central Michigan University \\ Mount Pleasant, MI 48859 \\ USA}
\thanks{$^{3}$ This work was partially supported by NSF grant DMS 0847586.}
\email{ben.salisbury@cmich.edu}
\keywords{crystal, Gindikin-Karpelevich formula, generalized Young wall}
\date{\today}
\subjclass[2010]{Primary 17B37; Secondary 05E10}
\begin{document}

\begin{abstract}
The classical Gindikin-Karpelevich formula
appears in Langlands' calculation of the constant terms of Eisenstein series on reductive groups and in Macdonald's work on $p$-adic groups and affine Hecke algebras. The formula has been generalized in the work of Garland to the affine Kac-Moody case, and the affine case has been geometrically constructed in a recent paper of Braverman, Finkelberg, and Kazhdan. On the other hand, there have been efforts to write the formula as a sum over Kashiwara's crystal basis or Lusztig's canonical basis, initiated by Brubaker, Bump, and Friedberg. In this paper, we write the affine Gindikin-Karpelevich formula as a sum over the crystal of generalized Young walls when the underlying Kac-Moody algebra is of affine type $A_n^{(1)}$. The coefficients of the terms in the sum are determined explicitly by the combinatorial data from Young walls.

\end{abstract}
\maketitle

\setcounter{section}{-1}
%%%%%%%%%%%%%%%%%%%%%%%
%%%%%%%%%%%%%%%%%%%%%%%
%%%%%%%%%%%%%%%%%%%%%%%
%%%%%%%%%%%%%%%%%%%%%%%
\section{Introduction}

The classical Gindikin-Karpelevich formula originated from a certain integration on real reductive groups \cite{GK:62}. When Langlands calculated the constant terms of Eisenstein series on reductive groups \cite{Lan:71}, he considered a $p$-adic analogue of the integration and called the resulting formula the {\it Gindikin-Karpelevich formula}. In the case of $\operatorname{GL}_{n+1}$, the formula can be described as follows:
let $F$ be a $p$-adic field with residue field of $q$ elements and let $N_-$ be the maximal unipotent subgroup of $\operatorname{GL}_{n+1}(F)$ with maximal torus $T$. Let $f^\circ$ denote the standard spherical vector corresponding to an unramified character $\chi$ of $T$, let $T(\CC)$ be the maximal torus in the $L$-group $\operatorname{GL}_{n+1}(\CC)$ of $\operatorname{GL}_{n+1}(F)$, and let $\zz \in T(\CC)$ be the element corresponding to $\chi$ via the Satake isomorphism.
Then the Gindikin-Karpelevich formula is given by
\begin{equation}\label{eq:GK}
\int_{N_-(F)} f^\circ({\bm n}) \,\mathrm{d}{\bm n} = \prod_{\alpha \in \Delta^+}
\frac{1-q^{-1}\zz^\alpha}{1-\zz^\alpha},
\end{equation}
where $\Delta^+$ is the set of positive roots of $\operatorname{GL}_{n+1}(\CC)$.
The formula appears in Macdonald's study on $p$-adic groups and affine Hecke algebras as well \cite{Mac:71}, and the product side of \eqref{eq:GK} is also known as Macdonald's $c$-function.

In the paper \cite{Ga:04}, Garland generalized Langlands' calculation to affine Kac-Moody groups and obtained an affine Gindikin-Karpelevich formula as a product over $\Delta^+ \cap w^{-1}(\Delta^{-})$ for each $w \in W$, where $\Delta^+$ (resp.\ $\Delta^-$) is the set of positive (resp.\ negative) roots of the corresponding affine Kac-Moody algebra and $W$ is the Weyl group.
In a recent paper of Braverman, Finkelberg, and Kazhdan \cite{BFK}, the authors interpreted the classical Gindikin-Karpelevich formula in a geometric way, and generalized the formula to affine Kac-Moody groups and obtained another version of affine Gindikin-Karpelevich formula, which has an additional ``correction factor" in the product side.

On the other hand, in the works of Brubaker, Bump and Friedberg \cite{BBF:11}, Bump and
Nakasuji \cite{BN:10}, and McNamara \cite{McN:11}, the product side of the classical Gindikin-Karpelevich formula in type $A_n$ was written as a sum over the crystal $\BB(\infty)$. (For the definition of a crystal, see \cite{HK:02, Kash:02}.) More precisely, they proved
\[
\prod_{\alpha\in \Delta^+} \frac{1-q^{-1}\zz^\alpha}{1-\zz^\alpha} = \sum_{b\in
\BB(\infty)} G_\ii^{(e)}(b)q^{\langle \wt(b),\rho\rangle} \zz^{-\wt(b)},
\]
where $\rho$ is the half-sum of the positive roots, $\wt(b)$ is the weight of $b$, and the coefficients $G_\ii^{(e)}(b)$ are defined using so-called BZL paths or Kashiwara's parametrization. As shown in \cite{KL:11} by H. Kim and K.-H. Lee, one can also use Lusztig's parametrization of canonical bases (\cite{Luszt:90, Luszt:91}) and the product can be written as
\begin{equation} \label{eqn-kl}
\prod_{\alpha\in\Delta^+} \frac{1-q^{-1}\zz^\alpha}{1-\zz^\alpha} = \sum_{b\in \BB(\infty)}
(1-q^{-1})^{\N(\phi_\ii(b))}\zz^{-\wt(b)},
\end{equation} where $\N(\phi_\ii(b))$ is the number of nonzero entries in Lusztig's parametrization $\phi_\ii(b)$. The equation \eqref{eqn-kl} was proved for all finite roots systems $\Delta$, and was generalized in a subsequent paper \cite{KL:11b} to the affine Kac-Moody case using the results of Beck, Chari, and Pressley \cite{BCP} and Beck and Nakajima \cite{BeckNa}.

The use of crystals connects the Gindikin-Karpelevich formula to combinatorial representation theory, since much work has been done on realizations of crystals through various combinatorial objects (e.g., \cite{Kam:10,Kang:03,KN:94,KS:97,Lit:95}).
 Indeed, for type $A_n$, K.-H. Lee and Salisbury \cite{LS-A} expressed the right side of  \eqref{eqn-kl} as a sum over marginally large Young tableaux using J. Hong and H.\ Lee's \cite{HL:08}  description of $\BB(\infty)$ and the coefficients were determined by a simple statistic $\seg(b)$ of the tableau $b$. Furthermore, the meaning of $\seg(b)$ was studied in the frameworks of Kamnitzer's MV polytope model \cite{Kam:10} and Kashiwara-Saito's geometric realization \cite{KS:97} of the crystal $\BB(\infty)$.

The goal of this paper is to extend this approach to affine type $A_n^{(1)}$ through generalized Young walls. The notion of a Young wall was first introduced by Kang \cite{Kang:03} in his extensive study of affine crystals. In the case of $\BB(\infty)$ in type $A_n^{(1)}$, J.-A.\ Kim and D.-U.\ Shin \cite{KS:10} considered
 a set of {\em generalized} Young walls to obtain a realization of $\BB(\infty)$, while H. Lee \cite{Lee:07} established a different realization. In this paper, we will adopt Kim and Shin's realization and  prove (Theorem \ref{main})
\[
\prod_{\alpha \in \Delta^+} \left( \frac{1-q^{-1}\zz^\alpha}{1-\zz^\alpha} \right)^{\mult(\alpha)} = \sum_{Y\in\YY(\infty)} (1-q^{-1})^{\N(Y)}\zz^{-\wt(Y)},
\]
where $\YY(\infty)$ is the set of reduced proper generalized Young walls and $\N(Y)$ is a certain statistic on $Y\in \YY(\infty)$.

    In type $A_n^{(1)}$, the correction factor in the formula of Braverman, Finkelberg, and Kazhdan, mentioned above is given by
\begin{equation}\label{eqn-correction}
\prod_{i=1}^n \prod_{j=1}^\infty \frac{1-q^{-i}\zz^{j\delta}}{1-q^{-(i+1)}\zz^{j\delta}}  ,
\end{equation} 
 where $\delta$ is the minimal positive imaginary root.
  In the last section we will write this correction factor as a sum over a subset of reduced proper generalized Young walls (Proposition \ref{correction}), obtain an expansion of the whole product as a sum over pairs of reduced proper generalized Young walls (Corollary \ref{cor-end}), and derive a combinatorial formula for the number of points in the intersection $T^{-\gamma} \cap S^0$ of certain orbits $T^{-\gamma}$ and $S^0$ in the (double) affine Grassmannian (Corollary \ref{cor-nup}).

\subsection*{Acknowledgements}
The authors are grateful to A. Braverman for helpful comments.

%\vskip 1 cm
%%%%%%%%%%%%%%%%%%%%%%%
%%%%%%%%%%%%%%%%%%%%%%%
%%%%%%%%%%%%%%%%%%%%%%%
%%%%%%%%%%%%%%%%%%%%%%%
\section{General definitions}\label{sec:bases}

Let $I = \{0,1,\dots,n\}$ be an index set and let $(A,\Pi,\Pi^\vee,P,P^\vee)$ be a Cartan datum of type $A_n^{(1)}$; i.e.,
\begin{itemize}
\item $A = (a_{ij})_{i,j\in I}$ is a generalized Cartan matrix of type $A_n^{(1)}$,
\item $\Pi = \{\alpha_i:i\in I\}$ is the set of simple roots,
\item $\Pi^\vee = \{h_i:i\in I\}$ is the set of simple coroots,
\item $P^\vee = \ZZ h_1 \oplus\cdots\oplus \ZZ h_n \oplus \ZZ d$ is the dual weight lattice,
\item $\mathfrak{h} = \CC\otimes_\ZZ P^\vee$ is the Cartan subalgebra,
\item and $P = \{ \lambda\in\mathfrak{h}^* : \lambda(P^\vee) \subset \ZZ \}$ is the weight lattice.
\end{itemize}
In addition to the above data, we have a bilinear pairing $\langle\ ,\ \rangle\colon P^\vee \times P \longrightarrow \ZZ$ defined by $\langle h_i,\alpha_j \rangle = a_{ij}$ and $\langle d,\alpha_j\rangle = \delta_{0,j}$.

Let $\mathfrak{g}$ be the affine Kac-Moody algebra associated with this Cartan datum, and denote by $U_v(\mathfrak{g})$ the quantized universal enveloping algebra of $\mathfrak{g}$.  We denote the generators of $U_v(\mathfrak{g})$ by $e_i$, $f_i$ ($i\in I$), and $v^h$ ($h\in P^\vee)$.  The subalgebra of $U_v(\mathfrak{g})$ generated by $f_i$ ($i\in I$) will be denoted by $U_v^-(\mathfrak{g})$.

A $U_v(\mathfrak{g})$-crystal is a set $\BB$ together with maps
\[
\widetilde e_i,\widetilde f_i \colon \BB \longrightarrow \BB\sqcup\{0\},\qquad
\varepsilon_i,\varphi_i\colon \BB \longrightarrow \ZZ \sqcup \{-\infty\},\qquad
\wt\colon \BB \longrightarrow P
\]
satisfying certain conditions (see \cite{HK:02,Kash:95}).
The negative part $U_v^-(\mathfrak{g})$ has a crystal base (see \cite{Kash:91}) which is a $U_v(\mathfrak{g})$-crystal.  We denote this crystal by $\BB(\infty)$, and denote its highest weight element by $u_\infty$.

Finally, we will describe the set of roots $\Delta$ for $\mathfrak{g}$.  Since we are fixing $\mathfrak{g}$ to be of type $A_n^{(1)}$, we may make this explicit.  Define
\begin{align*}
\Delta_{\cl} &= \{ \pm(\alpha_i + \cdots + \alpha_{j}) : 1\le i\le j \le n\},\\
\Delta_{\cl}^+ &= \{\alpha_i + \cdots + \alpha_{j} : 1\le i\le j \le n\}
\end{align*}
to be set of classical roots and positive classical roots; {\em i.e.}, roots in the root system of $\mathfrak{g}_\cl = \mathfrak{sl}_{n+1}$.  The minimal imaginary root is $\delta = \alpha_0 + \alpha_1 + \dots + \alpha_n$.  Then
\[
\Delta_{\im} = \{ m\delta : m\in \ZZ \setminus \{0 \} \}, \ \ \ \
\Delta_{\im}^+ = \{ m\delta : m\in \ZZ_{>0} \}.
\]
We have $\Delta = \Delta_{\re} \sqcup \Delta_{\im}$ and $\Delta^+ = \Delta_{\re}^+ \sqcup \Delta_{\im}^+$, where
\begin{align*}
\Delta_{\re} &= \{ \alpha + m\delta : \alpha\in\Delta_\cl,\ m\in \ZZ \} \\
\Delta_{\re}^+ &= \{ \alpha +m\delta : \alpha\in\Delta_\cl,\ m\in \ZZ_{>0} \} \cup \Delta_\cl^+.
\end{align*}
Recall $\mult(\alpha) = 1$ for any $\alpha\in \Delta_{\re}$ and $\mult(\alpha) = n$ for any $\alpha\in \Delta_{\im}$.  For notational convenience, since $\mult(m\delta) = n$, we write
\[
\Delta_{\im}^+ = \{ m_1\delta_1,\dots,m_n\delta_n : m_1,\dots,m_n \in \ZZ_{>0} \},
\]
where each $\delta_j$ is a copy of the imaginary root $\delta$.

%\vskip 1 cm
%%%%%%%%%%%%%%%%%%%%%%%
%%%%%%%%%%%%%%%%%%%%%%%
%%%%%%%%%%%%%%%%%%%%%%%
%%%%%%%%%%%%%%%%%%%%%%%
\section{Generalized Young walls}\label{sec:GYW}

We start by defining the board on which all generalized Young walls will be built.  Define
\begin{equation}\label{GYWboard}
\begin{tikzpicture}[baseline=75,font=\footnotesize,scale=.8]
 \draw (-6,0) to (-6,6.8);
 \draw (-5,0) to (-5,6.8);
 \draw (-4,0) to (-4,6.8);
 \draw (-3,0) to (-3,6.8);
 \draw (-2,0) to (-2,6.8);
 \draw (-1,0) to (-1,6.8);
 \draw[very thick] ( 0,0) to ( 0,6.8);
 \draw[very thick] (-7,0) to (0,0);
 \draw (-7,1) to (0,1);
 \draw (-7,2) to (0,2);
 \draw (-7,3) to (0,3);
 \draw (-7,4) to (0,4);
 \draw (-7,5) to (0,5);
 \draw (-7,6) to (0,6);
 \node at (-.5,5.5) {$1$};
 \node at (-.5,4.5) {$0$};
 \node at (-.5,3.5) {$n$};
 \node at (-.5,2.65) {$\vdots$};
 \node at (-.5,1.5) {$1$};
 \node at (-.5,0.5) {$0$};
 \node at (-1.5,5.5) {$0$};
 \node at (-1.5,4.5) {$n$};
 \node at (-1.5,3.5) {$n-1$};
 \node at (-1.5,2.65) {$\vdots$};
 \node at (-1.5,1.5) {$0$};
 \node at (-1.5,0.5) {$n$};
 \node at (-2.5,0.5) {$\cdots$};
 \node at (-2.5,1.5) {$\cdots$};
 \node at (-2.5,3.5) {$\cdots$};
 \node at (-2.5,4.5) {$\cdots$};
 \node at (-2.5,5.5) {$\cdots$};
 \node at (-3.5,5.5) {$2$};
 \node at (-3.5,4.5) {$1$};
 \node at (-3.5,3.5) {$0$};
 \node at (-3.5,2.65) {$\vdots$};
 \node at (-3.5,1.5) {$2$};
 \node at (-3.5,0.5) {$1$};
 \node at (-4.5,5.5) {$1$};
 \node at (-4.5,4.5) {$0$};
 \node at (-4.5,3.5) {$n$};
 \node at (-4.5,2.65) {$\vdots$};
 \node at (-4.5,1.5) {$1$};
 \node at (-4.5,0.5) {$0$};
 \node at (-5.5,5.5) {$0$};
 \node at (-5.5,4.5) {$n$};
 \node at (-5.5,3.5) {$n-1$};
 \node at (-5.5,2.65) {$\vdots$};
 \node at (-5.5,1.5) {$0$};
 \node at (-5.5,0.5) {$n$};
 \node at (-6.5,0.5) {$\cdots$};
 \node at (-6.5,1.5) {$\cdots$};
 \node at (-6.5,3.5) {$\cdots$};
 \node at (-6.5,4.5) {$\cdots$};
 \node at (-6.5,5.5) {$\cdots$};
 \node at (-.5,6.65) {$\vdots$};
 \node at (-1.5,6.65) {$\vdots$};
 \node at (-3.5,6.65) {$\vdots$};
 \node at (-4.5,6.65) {$\vdots$};
 \node at (-5.5,6.65) {$\vdots$};
 \node at (1.5,.5)  {$1$st row};
 \node at (1.5,1.5) {$2$nd row};
 \node at (1.5,2.65) {$\vdots$};
 \node at (1.5,3.5) {$(n+1)$st row};
 \node at (1.5,4.5) {$(n+2)$nd row};
 \node at (1.5,5.5) {$(n+3)$rd row};
 \node at (1.5,6.65) {$\vdots$};
\end{tikzpicture}
\end{equation}
In particular, the color of the $j$th site from the bottom of the $i$th column from the right in \eqref{GYWboard} is $j-i\bmod n+1$.

\begin{dfn}
A {\em generalized Young wall} is a finite collection of $i$-colored boxes $\boxed{i}$ ($i\in I$) on the board \eqref{GYWboard} satisfying the following building conditions.
\begin{enumerate}
\item The colored boxes should be located according to the colors of the sites on the board \eqref{GYWboard}.
\item The colored boxes are put in rows; that is, one stacks boxes from right to left in each row.
\end{enumerate}
\end{dfn}

For a generalized Young wall $Y$, we define the {\em weight} $\wt(Y)$ of $Y$ to be
\[
\wt(Y) = -\sum_{i\in I} m_i(Y)\alpha_i, \]
where $m_i(Y)$ is the number of $i$-colored boxes in $Y$.

\begin{dfn}
A generalized Young wall is called {\it proper} if for any $k > \ell$ and $k-\ell \equiv 0 \bmod n+1$, the number of boxes in the $k$th row from the bottom is less than or equal to that of the $\ell$th row from the bottom.
\end{dfn}

\begin{dfn}
Let $Y$ be a generalized Young wall and let $Y_k$ be the $k$th column of $Y$ from the right. Set $a_i(k)$, with $i\in I$ and $k\ge 1$, to be the number of $i$-colored boxes in the $k$th column $Y_k$.
\begin{enumerate}
\item We say $Y_k$ contains a {\it removable $\delta$} if we may remove one $i$-colored box for all $i\in I$ from $Y_k$ and still obtain a generalized Young wall.  In other words, $Y_k$ contains a removable $\delta$ if $a_{i-1}(k+1) < a_i(k)$ for all $i\in I$.
\item $Y$ is said to be {\it reduced} if no column $Y_k$ of $Y$ contains a removable $\delta$.
\end{enumerate}
\end{dfn}

Let $\YY(\infty)$ denote the set of all reduced proper generalized Young walls.
In \cite{KS:10}, Kim and Shin defined a crystal structure on $\YY(\infty)$ and proved the following theorem.
We refer the reader to \cite{KS:10} for the details.

\begin{thm}[\cite{KS:10}]
We have $\BB(\infty) \cong \YY(\infty)$ as crystals.
\end{thm}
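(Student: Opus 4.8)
The plan is to apply the Kashiwara–Saito characterization of $\BB(\infty)$ \cite{KS:97}. Recall that for each $i$ one has the elementary crystal $B_i=\{b_i(m):m\in\ZZ\}$, with $\wt(b_i(m))=m\alpha_i$, $\widetilde e_i\,b_i(m)=b_i(m+1)$, and $\widetilde f_i\,b_i(m)=b_i(m-1)$. The criterion asserts that a crystal $B$ with a distinguished element $b_0$ is isomorphic to $\BB(\infty)$ via $b_0\mapsto u_\infty$ provided: (i) $\wt(b_0)=0$ and $\varepsilon_i(b_0)=0$ for all $i$; (ii) $\varepsilon_i(b)\in\ZZ$ for all $b$ and $i$; (iii) for each $i$ there is a strict embedding of crystals $\Psi_i\colon B\hookrightarrow B\otimes B_i$ with $\Psi_i(b_0)=b_0\otimes b_i(0)$ and $\Psi_i(B)\subset B\otimes\{b_i(m):m\le 0\}$; and (iv) every $b\neq b_0$ satisfies $\widetilde e_i\,b\neq 0$ for some $i$. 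I would take $B=\YY(\infty)$ with $b_0=Y_\varnothing$ the empty wall. First I would pin down the Kashiwara operators by a signature rule: for a fixed color $i$, read the board of $Y$ and record a $+$ for each addable $i$-box and a $-$ for each removable $i$-box, cancel adjacent $-\,+$ pairs, let $\widetilde f_i$ add the box at the surviving $+$ farthest in the chosen reading direction and $\widetilde e_i$ remove the box at the corresponding surviving $-$, and set $\varepsilon_i(Y)$ to be the number of surviving $-$ signs with $\varphi_i(Y)=\varepsilon_i(Y)+\langle h_i,\wt(Y)\rangle$.

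The main work is to show these operators preserve $\YY(\infty)$, that is, $\widetilde e_iY,\widetilde f_iY\in\YY(\infty)\sqcup\{0\}$. Since adding or removing a single box can in principle destroy either \emph{properness} (the row-length inequalities for rows congruent mod $n+1$) or \emph{reducedness} (the absence of a removable $\delta$, governed by the inequalities $a_{i-1}(k+1)<a_i(k)$), I would verify directly that the box selected by the signature rule never violates these. I expect this to be the principal obstacle, and it is genuinely special to the generalized affine setting: the condition defining a removable $\delta$ couples all $n+1$ colors of a column at once, whereas the signature rule modifies only one color, so one must check that the single altered entry $a_i(k)$ can neither complete nor break a full column against the profiles of the neighbouring columns. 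I would handle this by a case analysis on the position of the surviving sign relative to the column profile $(a_i(k))_k$, showing that the very extremality that makes a box addable (respectively removable) under the cancellation is incompatible with creating a removable $\delta$ or reversing a row inequality.

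With well-definedness in hand, the remaining conditions are comparatively routine. The formula $\wt(Y)=-\sum_i m_i(Y)\alpha_i$ gives $\wt(Y_\varnothing)=0$ and places all weights in $\sum_i\ZZ_{\le 0}\,\alpha_i$, while $\varepsilon_i(Y_\varnothing)=0$ is clear since $Y_\varnothing$ admits no removable box; finiteness of every wall makes each $\varepsilon_i(Y)$ a nonnegative integer, never $-\infty$, giving (i) and (ii). For (iv) I would argue that any nonempty reduced proper wall has at least one genuinely removable box: choosing an extremal box (for instance the top box of a suitably chosen column) and testing its color $i$ against the building, properness, and reducedness conditions shows $\widetilde e_iY\neq 0$, so $Y_\varnothing$ is the unique source and the unique weight-zero element.

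For the embeddings in (iii) I would set $\Psi_i(Y)=\widetilde e_i^{\,\varepsilon_i(Y)}Y\otimes b_i(-\varepsilon_i(Y))$, landing in $B\otimes\{b_i(m):m\le 0\}$, and verify that it is a strict morphism of crystals. The key point is that applying $\widetilde e_i$ to saturation isolates exactly the $i$-string through $Y$, so that the tensor-product rule for $B\otimes B_i$ reproduces the signature bookkeeping by construction; injectivity is immediate because $Y$ is recovered from $\widetilde e_i^{\,\varepsilon_i(Y)}Y$ by re-adding $\varepsilon_i(Y)$ boxes of color $i$ in the prescribed positions. Assembling (i)–(iv), the Kashiwara–Saito criterion produces the unique crystal isomorphism $\YY(\infty)\xrightarrow{\ \sim\ }\BB(\infty)$ sending $Y_\varnothing\mapsto u_\infty$, which is the assertion of the theorem.
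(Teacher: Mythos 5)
First, a remark on the comparison itself: the paper does not prove this theorem at all --- it is Kim and Shin's result, and the paper simply cites \cite{KS:10} for both the crystal structure on $\YY(\infty)$ and the isomorphism. Your overall strategy (define the operators by a signature rule and verify a recognition theorem for $\BB(\infty)$ of Kashiwara--Saito type) is indeed the standard route, and essentially the one taken in \cite{KS:10}; the problems are in the execution. The most serious gap is your construction of the strict embedding: the map $\Psi_i(Y)=\widetilde e_i^{\,\varepsilon_i(Y)}Y\otimes b_i(-\varepsilon_i(Y))$ is \emph{not} a crystal morphism, already in finite type $A_2$. Indeed, take $b=\widetilde f_2\widetilde f_1 u_\infty$. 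Since $\varepsilon_2(\widetilde f_1u_\infty)=0$, your map sends $\widetilde f_1u_\infty\mapsto \widetilde f_1u_\infty\otimes b_2(0)$; as $\varphi_2(\widetilde f_1u_\infty)=1>0=\varepsilon_2(b_2(0))$, the tensor rule forces $\widetilde f_2\bigl(\widetilde f_1u_\infty\otimes b_2(0)\bigr)=\widetilde f_2\widetilde f_1u_\infty\otimes b_2(0)$, whereas your formula sends $\widetilde f_2\widetilde f_1u_\infty$ (which has $\varepsilon_2=1$) to $\widetilde f_1u_\infty\otimes b_2(-1)$. The Kashiwara embedding does not peel off the ordinary $i$-string; it peels the string with respect to the $\ast$-crystal structure (Kashiwara involution), and in a combinatorial model it must be given by a separate explicit rule (in \cite{KS:10} one strips $i$-blocks from designated positions of the wall), after which \emph{strictness} is the core of the proof --- it cannot hold ``by construction.'' Relatedly, your condition (iv) misquotes the criterion: the Kashiwara--Saito condition on $b\neq b_0$ is that some $\Psi_i(b)$ has second factor $b_i(-m)$ with $m>0$ (a statement about $\varepsilon_i^\ast$), not that $\widetilde e_i b\neq 0$ for some $i$; these are not interchangeable without further argument.

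The second gap is that you leave the signature rule unspecified (``the chosen reading direction''), yet the well-definedness that you rightly call the principal obstacle is actually \emph{false} for some conventions, so no generic case analysis can close it. For $n=1$, let $Y$ be the wall with a single $0$-box in the bottom row. A $1$-box is addable at the second site of row $1$ or at the first site of row $2$, so both sites carry a $+$ in your signature. Adding at row $1$ gives a reduced wall, but adding at row $2$ gives $a_0(1)=a_1(1)=1$ with column $2$ empty, so column $1$ contains a removable $\delta$ and the result leaves $\YY(\infty)$. Thus whether $\widetilde f_1Y\in\YY(\infty)$ depends entirely on the reading order you never fix, and designing the rule so that closure holds (and then proving it in general) is precisely the nontrivial combinatorial content of \cite{KS:10}. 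As it stands, the proposal is a correct outline of the known strategy, but both load-bearing steps --- the embeddings $\Psi_i$ and the stability of $\YY(\infty)$ under the operators --- are either wrong as stated or not actually carried out.
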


%\begin{ex}
%Suppose $n=3$ and let $\widetilde f_i$ be the Kashiwara operator, $i\in I$. If $Y\in \YY(\infty)$ is given by
%\[
%Y = \wall{{},{1,0},{2,1},{3}}\ ,
%\]
%then
%\[
%\widetilde f_0Y = \wall{{0},{1,0},{2,1},{3}}\ ,
%\ \ \ \
%\widetilde f_1Y = \wall{{},{1,0},{2,1},{3},{},{1}}\ ,
%\ \ \ \
%\widetilde f_2Y = \wall{{},{1,0},{2,1},{3,2}}\ ,
%\ \ \ \
%\widetilde f_3Y = \wall{{},{1,0},{2,1},{3},{},{},{},{3}}\ .
%\]
%\end{ex}

%\vskip 1 cm
%%%%%%%%%%%%%%%%%%%%%%%
%%%%%%%%%%%%%%%%%%%%%%%
%%%%%%%%%%%%%%%%%%%%%%%
%%%%%%%%%%%%%%%%%%%%%%%
\section{Kostant partitions}

Let
\[
\alpha_i^{(\ell)} = \alpha_i + \alpha_{i-1} + \cdots + \alpha_{i-\ell+1}, \ \ \ \ i \in I , \ 1\le \ell \le n,
\]
where the indices are understood mod $n+1$.

\begin{ex}
Let $n=2$. Then
\begin{align*}
\alpha_0^{(1)} &= \alpha_0 & \alpha_1^{(1)} &= \alpha_1 &
\alpha_2^{(1)} &= \alpha_2 \\
\alpha_0^{(2)} &= \alpha_0 + \alpha_2 & \alpha_1^{(2)} &= \alpha_1 + \alpha_0 & \alpha_2^{(2)} &= \alpha_2 + \alpha_1.
\end{align*}
\end{ex}

Let
\[
S_1 =
\left\{ (m_k\delta_k), \ (c_{i,\ell}\, \delta+ \alpha_{i}^{(\ell)}) : \substack{m_k>0,\ 1\le k \le n, \\[3pt] c_{i,\ell} \ge 0, \ i\in I,\ 1 \le \ell\le n}\right\}.
\]
We introduce the generator $\bd^{(m)}$ for $m \in \ZZ_{>0}$ and set
\[
S_2= \{ \bd^{(m)} : m \in \ZZ_{>0} \}.
\]
Let $\widetilde{\GG}$ be the free abelian group generated by $S_1 \cup S_2$.
Consider the subgroup $L$ of $\widetilde{\GG}$ generated by the elements: for $m>0$,
\begin{equation}\label{reductiondelta}
\begin{cases} \bd^{(m)} - \displaystyle
\sum_{i\in I} (k\delta+\alpha_i^{(\ell)}), & m=(n+1)k+\ell, \ 1\le \ell \le n; \\ \bd^{(m)} - \displaystyle
\bd^{(k)} - \sum_{i=1}^n (k\delta_i), & m=(n+1)k.
\end{cases}
\end{equation}
We set $\GG = \widetilde{\GG}/L$ and let $\GG^+$ be the $\ZZ_{\ge 0}$-span of $S_1 \cup S_2$ in $\GG$. The following observation will play an important role.

\begin{remark} \label{rmk-delta}
If we slightly abuse language, we may say that, in $\GG$, the element $\bd^{(m)}$ is equal to the sum of $n+1$ distinct positive ``roots" of equal length $m$ whose total weight is $m\delta$.  In particular, if $m=(n+1)k+\ell$ ($1\le \ell \le n$), then $\bd^{(m)}$ is equal to the sum of $n+1$ distinct positive real roots of equal length $m$, and if $m=(n+1)k$, then $\bd^{(m)}$ is equal to the sum of $(k\delta_1), \ldots, (k\delta_n), \bd^{(k)}$ of equal length $m$. 
\end{remark}

\begin{ex}
Let $n=2$.  Then in $\mathcal G$,
\begin{align*}
\bd^{(1)} &= (\alpha_0) + (\alpha_1) + (\alpha_2) \\
\bd^{(2)} &= (\alpha_0+\alpha_2) + (\alpha_1+\alpha_0) + (\alpha_2 + \alpha_1) \\
\bd^{(3)} &= \bd^{(1)}+ (\delta_1) + (\delta_2)= (\alpha_0) + (\alpha_1) + (\alpha_2) + (\delta_1) + (\delta_2) \\
\bd^{(4)} &= (\delta+\alpha_0) + (\delta+\alpha_1) + (\delta+\alpha_2) \\
\bd^{(5)} &= (\delta+\alpha_0+\alpha_2) + (\delta+\alpha_1+\alpha_0) + (\delta+\alpha_2+\alpha_1)\\
\bd^{(6)} & = \bd^{(2)}+ (2\delta_1) + (2\delta_2) = (\alpha_0+\alpha_2) + (\alpha_1+\alpha_0) + (\alpha_2+\alpha_1) + (2\delta_1) + (2\delta_2) \\
& \ \, \vdots \\
\bd^{(9)} &= \bd^{(3)} + (3\delta_1) + (3\delta_2) = (\alpha_0) + (\alpha_1) + (\alpha_2) + (\delta_1) + (\delta_2) + (3\delta_1) + (3\delta_2) \\
& \ \, \vdots
\end{align*}
\end{ex}

\begin{dfn}
Let $\pp \in \GG^+$, and write $\pp$ as a $\ZZ_{\ge 0}$-linear combination of elements in $S_1 \cup S_2$.
\begin{enumerate}
\item We say an expression of $\pp$ contains a {\it removable $\delta$} if it contains some parts that can be replaced by $\bd^{(k)}$ for some $k>0$.
\item We say an expression of $\pp$ is {\it reduced} if it does not contain a removable $\delta$.
\end{enumerate}
\end{dfn}

 Let $\KK(\infty)$ denote the set of reduced expressions of elements in $\GG^+$. We define the set $\KK$ of {\it Kostant partitions} to be the $\ZZ_{\ge 0}$-span of the set $S_1$ in $\GG^+$. Notice that the set $S_1$ is linearly independent.

\begin{dfn}
 For $\pp \in \KK$, we denote by $\N(\pp)$ the number of distinct parts in $\pp$.
\end{dfn}

\begin{ex}
If $\pp = 2(\alpha_0+\alpha_1)  + 5(\alpha_2+\alpha_1) +  2(\delta_1) + (\delta_2) + (\alpha_0) + 4(\alpha_1)$, then $\N(\pp)=6$.
\end{ex}

Define a {\it reduction map} $\psi\colon \KK \longrightarrow \KK(\infty)$
as follows: Given $\pp \in \KK$, write it as a $\ZZ_{\ge 0}$-linear combination of elements in $S_1$. Replace $k_1\sum_{i \in I} (\alpha_i^{(1)})$ in the expression, where $k_1$ is the largest possible, with $k_1\bd^{(1)}$. The resulting expression is denoted by $\pp^{(1)}$. Next, replace $k_2\sum_{i \in I}( \alpha_i^{(2)})$ (or $k_2( \bd^{(1)}+(\delta_1))$ if $n=1$), where $k_2$ is the largest possible, with $k_2\bd^{(2)}$. The result is denoted by $\pp^{(2)}$. Continue this process with $\bd^{(k)}$ ($k\ge 3$) using the relations in \eqref{reductiondelta}. The process stops with $\pp^{(s)}$ for some $s$. By construction, $\pp^{(s)} \in \KK(\infty)$, and we define $\psi(\pp) = \pp^{(s)}$.

Conversely, we define the {\it unfolding map} $\phi\colon \KK(\infty) \longrightarrow \KK$ by unfolding the $\bd^{(k)}$'s consecutively.  That is, given $\qq\in \KK(\infty)$,
find $\bd^{(r)}$ with the largest $r$ and replace it with the corresponding sum from \eqref{reductiondelta}. The resulting expression is denoted by $\qq^{(r)}$. Next, replace $\bd^{(r-1)}$ with the corresponding sum from \eqref{reductiondelta}. The result is denoted by $\qq^{(r-1)}$. Continue this process until we replace $\bd^{(1)}$ with $\sum_{i \in I} (\alpha_i^{(1)})$ and obtain $\qq^{(1)}$. By construction, $\qq^{(1)} \in \KK$, and we define $\phi(\qq) = \qq^{(1)}$.

It is clear from the definitions that $\psi$ and $\phi$ are inverses to each other.  Hence, we have proven the following lemma.

\begin{lemma} \label{lem-unf}
The reduction map $\psi\colon  \KK \longrightarrow \KK(\infty)$ is a bijection, whose inverse is the unfolding map $\phi\colon \KK(\infty) \longrightarrow \KK$.
\end{lemma}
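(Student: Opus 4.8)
The plan is to verify directly that the two maps $\psi$ and $\phi$ are mutually inverse, since the lemma follows formally once that is established. Both maps are defined by a finite sequence of local substitutions governed by the relations in \eqref{reductiondelta}, so the essential task is bookkeeping: to confirm that applying $\phi$ and then $\psi$ (and vice versa) returns each starting expression unchanged. I would begin by observing that the generators $\bd^{(m)}$ are indexed by $m \in \ZZ_{>0}$, and that each relation in \eqref{reductiondelta} expresses a single $\bd^{(m)}$ in terms of elements of strictly smaller length-index $k < m$ together with elements of $S_1$. This triangular structure (substitutions only ever decrease the index) is what guarantees both processes terminate and that the order of operations prescribed in the definitions is well-defined.

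First I would make precise the matching between a ``removable $\delta$'' in a reduced expression and the folding step of $\psi$. In the case $m = (n+1)k+\ell$ with $1\le \ell \le n$, the relation reads $\bd^{(m)} = \sum_{i\in I}(k\delta + \alpha_i^{(\ell)})$, so folding replaces exactly the $n+1$ parts $(k\delta+\alpha_i^{(\ell)})$, $i\in I$, by a single $\bd^{(m)}$; unfolding reverses this. In the case $m=(n+1)k$, the relation $\bd^{(m)} = \bd^{(k)} + \sum_{i=1}^n (k\delta_i)$ involves a smaller generator $\bd^{(k)}$, which is why $\phi$ is defined to unfold the $\bd^{(r)}$ with the \emph{largest} index $r$ first: when we reach $\bd^{(m)}$, any $\bd^{(k)}$ produced on its right-hand side will itself be unfolded only at a later (smaller-index) stage, so no generator is unfolded prematurely and none is left folded. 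Symmetrically, $\psi$ folds starting from $\bd^{(1)}$ upward, taking $k_m$ as large as possible at each stage, so that the imaginary pieces $(k\delta_i)$ freed up by folding an inner $\bd^{(k)}$ are available to be absorbed when the outer $\bd^{(m)}$ is formed.

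The two composites would then be checked by induction on the largest index appearing. For $\phi \circ \psi = \mathrm{id}$ on $\KK$: given $\pp \in \KK$ written over $S_1$, folding produces some $\pp^{(s)} \in \KK(\infty)$ in which certain $\bd^{(m)}$'s appear with multiplicities $k_m$; unfolding these from the top down, using exactly the relations that created them, restores the original $S_1$-combination because at each index the maximality of $k_m$ in $\psi$ means precisely those parts were consumed that the unfolding step regenerates. For $\psi \circ \phi = \mathrm{id}$ on $\KK(\infty)$: given $\qq \in \KK(\infty)$, unfolding yields an element of $\KK$, and refolding recovers $\qq$ because, by the definition of reduced expression, $\qq$ contains no removable $\delta$, so the greedy folding in $\psi$ is forced to reassemble each $\bd^{(m)}$ that was present and cannot over-fold into any larger generator. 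The main obstacle I anticipate is the interaction in the case $m=(n+1)k$, where the relation mixes a smaller generator $\bd^{(k)}$ with imaginary parts $(k\delta_i)$: one must argue carefully that the prescribed orderings (top-down for $\phi$, bottom-up with maximal multiplicities for $\psi$) make the nested substitutions commute correctly, so that an inner $\bd^{(k)}$ is neither double-counted nor stranded. Verifying this coherence of the two orderings is the crux; once it is in hand, the bijectivity is immediate from $\psi$ and $\phi$ being two-sided inverses.
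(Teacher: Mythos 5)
Your proposal is correct and follows essentially the same route as the paper: the paper's own proof consists of defining $\psi$ (greedy, bottom-up folding) and $\phi$ (top-down unfolding) exactly as you describe and then asserting that they are inverses by construction. Your additional bookkeeping --- termination via the triangular structure of \eqref{reductiondelta}, maximality of the fold multiplicities, and reducedness of $\qq$ preventing over-folding in the composite $\psi \circ \phi$ --- simply makes explicit the verification the paper declares ``clear from the definitions.''
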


For later use, we need to describe the unfolding map $\phi$ more explicitly.
\begin{lemma} \label{lem-more}
For $p \in \ZZ_{\ge 0}$ and $q \in \ZZ_{>0}$, we have
\begin{equation}\label{unfolddelta}
\phi\big(\bd^{((n+1)^{p}q)}\big) = \sum_{j=1}^{n+1} (r \delta  + \alpha_{j-1}^{(s)}) + \sum_{i=0}^{p-1} \Big(\sum_{j=1}^n \big((n+1)^{i}q \, \delta_j\big)\Big),
\end{equation}
where we write $q = (n+1)r+s$, $1 \le s \le n$. In particular, $\bd^{((n+1)^{p}q)}$ has $n+1+np$ parts.
\end{lemma}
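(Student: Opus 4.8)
The plan is to prove the unfolding formula \eqref{unfolddelta} by induction on $p$, since the unfolding map $\phi$ is defined by repeatedly applying the relations \eqref{reductiondelta}, and the two cases there ($m$ divisible by $n+1$ versus not) align naturally with whether $p=0$ or $p>0$. First I would establish the base case $p=0$. Here $\bd^{(q)}$ with $q=(n+1)r+s$ and $1\le s\le n$ falls into the first case of \eqref{reductiondelta} (when $r\ge 1$, or when $r=0$ and $s=q$), so a single application gives $\bd^{(q)} = \sum_{i\in I}(r\delta + \alpha_i^{(s)})$. I would check that after reindexing $i\in I=\{0,\dots,n\}$ as $j=i+1\in\{1,\dots,n+1\}$, this is exactly $\sum_{j=1}^{n+1}(r\delta+\alpha_{j-1}^{(s)})$, and that the second sum in \eqref{unfolddelta} is empty when $p=0$. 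Since every summand here lies in $S_1$ (each is either a real-root part $c\,\delta+\alpha_i^{(\ell)}$ with $c=r$, or a $\delta_i$-part when $\alpha_i^{(s)}$ degenerates appropriately), this confirms $\phi(\bd^{(q)})\in\KK$ with the claimed $n+1$ parts.

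\smallskip

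For the inductive step, I would consider $\bd^{((n+1)^{p}q)}$ with $p\ge 1$. Writing $m=(n+1)^p q = (n+1)\big((n+1)^{p-1}q\big)$, this is divisible by $n+1$, so the \emph{first} step of unfolding uses the second case of \eqref{reductiondelta} with $k=(n+1)^{p-1}q$, yielding
\[
\bd^{((n+1)^{p}q)} \;\longmapsto\; \bd^{((n+1)^{p-1}q)} + \sum_{i=1}^n \big((n+1)^{p-1}q\,\delta_i\big).
\]
The $\delta_i$-parts produced here are already in $S_1$ and are untouched by subsequent unfolding (they contain no $\bd^{(k)}$), so they survive to the final expression. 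The unfolding then proceeds on $\bd^{((n+1)^{p-1}q)}$, and by the induction hypothesis applied to exponent $p-1$ (with the same $q$, hence the same $r$ and $s$) we get
\[
\phi\big(\bd^{((n+1)^{p-1}q)}\big) = \sum_{j=1}^{n+1}(r\delta+\alpha_{j-1}^{(s)}) + \sum_{i=0}^{p-2}\Big(\sum_{j=1}^n\big((n+1)^{i}q\,\delta_j\big)\Big).
\]
Adding the freshly produced $\sum_{j=1}^n\big((n+1)^{p-1}q\,\delta_j\big)$ extends the second sum's range from $i=0,\dots,p-2$ to $i=0,\dots,p-1$, which is precisely the right-hand side of \eqref{unfolddelta} for exponent $p$.

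\smallskip

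The part count then follows immediately: the first sum contributes $n+1$ parts and the double sum contributes $n$ parts for each of the $p$ values $i=0,\dots,p-1$, for a total of $n+1+np$, as claimed. The main subtlety I expect is not in the algebra but in verifying that the order of operations in the definition of $\phi$ (unfold the \emph{largest} $\bd^{(r)}$ first) genuinely produces the recursion I am using: I must confirm that unfolding $\bd^{((n+1)^p q)}$ first, before any smaller generator, is consistent with the stated ``largest $r$ first'' rule, and that the $\delta_i$-parts split off at each stage never recombine or get reprocessed. A secondary care point is the boundary behavior of the superscript notation $\alpha_{j-1}^{(s)}$ when $s$ is such that the root $\alpha_i^{(s)}$ wraps around mod $n+1$, but since the indices in $\alpha_i^{(\ell)}$ are by definition taken mod $n+1$ and $1\le s\le n$ keeps us within the range where each $\alpha_{j-1}^{(s)}$ is a genuine element of $S_1$, this causes no difficulty.
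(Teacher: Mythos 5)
Your proof is correct and follows essentially the same route as the paper: induction on $p$, with the base case $p=0$ handled by the first relation in \eqref{reductiondelta} and the inductive step peeling off $\sum_{j=1}^n \big((n+1)^{p-1}q\,\delta_j\big)$ via the second relation before invoking the induction hypothesis. Your extra verifications (the ``largest first'' unfolding order and the fact that $\delta_j$-parts are never reprocessed) are sound and only make explicit what the paper leaves implicit.
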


\begin{proof}
We use induction on $p$. Assume that $p=0$. Then it follows from \eqref{reductiondelta} that
\[ \phi\big(\bd^{(q)}\big) = \sum_{j=1}^{n+1} (r \delta  + \alpha_{j-1}^{(s)}). \]
Now assume that $p \ge 1$. From \eqref{reductiondelta} and the induction hypothesis, we obtain
\begin{align*}
\phi\big(\bd^{((n+1)^{p}q)}\big)
&= \phi \big ( \bd^{((n+1)^{p-1}q)}\big)+ \sum_{j=1}^n \big((n+1)^{p-1}q \, \delta_j\big) \\
& = \sum_{j=1}^{n+1} (r \delta  + \alpha_{j-1}^{(s)}) + \sum_{i=0}^{p-2} \Big(\sum_{j=1}^n \big((n+1)^{i}q \, \delta_j\big)\Big) + \sum_{j=1}^n \big((n+1)^{p-1}q \, \delta_j\big) \\
&= \sum_{j=1}^{n+1} (r \delta  + \alpha_{j-1}^{(s)}) + \sum_{i=0}^{p-1} \Big(\sum_{j=1}^n \big((n+1)^{i}q \, \delta_j\big)\Big) .\qedhere
\end{align*}
\end{proof}

In what follows, we will establish a bijection between $\YY(\infty)$ and $\KK(\infty)$.
For $Y\in \YY(\infty)$, we define $N_{k}(Y)$ ($k\ge 1$) to be the number of boxes in the
$k$th row of $Y$.
We first define a map $\Psi\colon \YY(\infty) \longrightarrow \KK(\infty)$ by describing how the blocks in a reduced proper generalized Young wall $Y$ contribute to the parts in a reduced Kostant partition.   Let $Y\in \YY(\infty)$ and consider the correspondence, for $m \ge 0$, $1 \le j \le n$ and $1 \le \ell \le n$,
\begin{equation}\label{buildp}
\Psi\colon \begin{cases}
N_{(n+1)m+j}(Y) = (n+1)k &\mapsto (k\delta_j), \\
N_{(n+1)m+j}(Y) = (n+1)k+\ell &\mapsto (k\delta + \alpha_{j-1}^{(\ell)}), \\
N_{(n+1)(m+1)}(Y) = (n+1)k &\mapsto \bd^{(k)} ,\\
N_{(n+1)(m+1)}(Y) = (n+1)k+\ell & \mapsto (k\delta + \alpha_{n}^{(\ell)}).
\end{cases}
\end{equation}
Then $\Psi(Y)$ is defined to be the expression $\pp$ obtained by adding up all the parts prescribed by \eqref{buildp}.

\begin{lemma} \label{lem-Yp}
For any $Y\in \YY(\infty)$, we have $\Psi(Y) \in \KK(\infty)$.
\end{lemma}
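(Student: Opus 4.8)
The plan is to check the two defining properties of a member of $\KK(\infty)$ separately: that $\Psi(Y)$ lies in $\GG^+$, and that the expression produced by \eqref{buildp} is reduced. The first is immediate, since every part prescribed by \eqref{buildp} is one of $(k\delta_j)$, $(k\delta+\alpha_{j-1}^{(\ell)})$, $(k\delta+\alpha_n^{(\ell)})$, or $\bd^{(k)}$, each an element of $S_1\cup S_2$, and $\Psi(Y)$ is assembled with nonnegative coefficients (each coefficient being the number of rows of a prescribed residue and length). So the real content is that $\Psi(Y)$ contains no removable $\delta$. Here I would prove the sharper statement that $\Psi(Y)$ is reduced if and only if $Y$ is reduced; the lemma then follows at once because $Y\in\YY(\infty)$ is reduced by hypothesis.

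The key step is to compare the two notions of ``removable $\delta$'' through the row-length statistics $N_r(Y)$. Let $a_i(c)$ denote the number of $i$-colored boxes in the $c$th column (I use $c$ for the column index to avoid a clash with the $k$ appearing in $k\delta$). By building condition (2) each row is right-justified, so a box sits in row $r$ and column $c$ exactly when $N_r(Y)\ge c$, and by the colouring rule its colour is $r-c\bmod n+1$. Hence
\[
a_i(c) = \#\{\, r : N_r(Y)\ge c,\ r\equiv c+i \pmod{n+1}\,\},
\]
and $a_{i-1}(c+1)$ counts the same residue class but with $N_r(Y)\ge c+1$, whence
\[
a_i(c) - a_{i-1}(c+1) = \#\{\, r : N_r(Y)=c,\ r\equiv c+i \pmod{n+1}\,\}.
\]
As $i$ runs over $I$, the residue $c+i$ runs over all of $\ZZ/(n+1)\ZZ$, so the column condition ``$a_{i-1}(c+1)<a_i(c)$ for all $i\in I$'' is equivalent to the statement that \emph{for every residue $\rho$ there is a row of residue $\rho$ of length exactly $c$}. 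Thus $Y$ is reduced precisely when no $c\ge 1$ is ``saturated'' in this sense.

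It remains to match saturation against the two reduction relations in \eqref{reductiondelta}. Reading off \eqref{buildp}, the part $(k\delta+\alpha_i^{(\ell)})$ occurs in $\Psi(Y)$ exactly when $Y$ has a row of residue $i+1\bmod n+1$ and length $(n+1)k+\ell$, while $\bd^{(k)}$, resp.\ $(k\delta_i)$, occurs exactly when $Y$ has a residue-$0$, resp.\ residue-$i$, row of length $(n+1)k$. Consequently the type-A relation applies to $\Psi(Y)$ for a given $(k,\ell)$ iff every residue has a row of length $(n+1)k+\ell$, and the type-B relation applies for a given $k$ iff every residue has a row of length $(n+1)k$; in either case this is exactly saturation of the column $c=(n+1)k+\ell$, resp.\ $c=(n+1)k$. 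Therefore $\Psi(Y)$ contains a removable $\delta$ if and only if $Y$ does, which gives the desired equivalence and the lemma. The only genuine obstacle is the bookkeeping of box colours against residues of rows and columns; once the identity for $a_i(c)-a_{i-1}(c+1)$ is established, both notions of removable $\delta$ collapse to the single combinatorial condition of row-length saturation, and no use of the properness hypothesis is needed.
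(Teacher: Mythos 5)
Your proof is correct and takes essentially the same route as the paper: both arguments come down to the observation that, via \eqref{buildp}, a removable $\delta$ in $\Psi(Y)$ corresponds exactly to $n+1$ rows of $Y$ of equal length occupying all $n+1$ residue classes of row positions, which is precisely a removable $\delta$ in $Y$. You make explicit the column-count identity $a_i(c)-a_{i-1}(c+1)=\#\{\,r : N_r(Y)=c,\ r\equiv c+i \bmod n+1\,\}$ and prove the equivalence in both directions, whereas the paper argues only the contrapositive it needs (citing Remark \ref{rmk-delta}) and leaves this bookkeeping implicit.
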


\begin{proof}
Let $\pp = \Psi(Y)$.  It is clear that $\pp \in \GG^+$, so it remains to show the expression of $\pp$ is reduced. On the contrary, assume that $\pp$ contains a removable $\delta$. By Remark \ref{rmk-delta}, the expression of $\pp$ contains a sum of $n+1$ distinct positive ``roots" of equal length, and the sum corresponds through \eqref{buildp} to a collection of rows of $Y$ with equal length in non-congruent positions. Then $Y$ contains a removable $\delta$, which is a contradiction. Thus $\pp$ does not contain a removable $\delta$, so $\pp$ is reduced.
\end{proof}

\begin{ex}
Let $Y = \widetilde f_2^3 \widetilde f_0^2 \widetilde f_1^2 \widetilde f_2 \widetilde f_1 \widetilde f_0 Y_\infty$.  That is, let
\[
Y = \wall{{0,2,1,0,2},{1,0,2},{2},{},{1}}\ .
\]
Then $\Psi(Y) = (\delta+\alpha_0+\alpha_2) + (\delta_2) + (\alpha_2) + (\alpha_1)$.
\end{ex}

Now define a function $\Phi\colon \KK(\infty) \longrightarrow \YY(\infty)$ in the following way.  Let $\pp$ be a reduced Kostant partition.  To each part of the partition, we assign a row of a generalized Young wall using the following prescription. For $1 \le j \le n$ and $1 \le \ell \le n$,
\begin{equation}\label{buildY}
\Phi\colon
\begin{cases}
(k\delta_j) &\mapsto \text{$(n+1)k$ boxes in row $\equiv j$ (mod $n+1$)},\\
(k\delta + \alpha_{j-1}^{(\ell)}) &\mapsto \text{$(n+1)k+\ell$ boxes in row $\equiv j$ (mod $n+1$)},\\
\bd^{(k)} &\mapsto \text{$(n+1)k$ boxes in row $\equiv 0$ (mod $n+1$)}.
\end{cases}
\end{equation}
To construct the Young wall $\Phi(\pp)$ from this data, we arrange the rows so that the number of boxes in each row of the form $(n+1)k+j$, for a fixed $j$, is weakly decreasing as $k$ increases. Hence $\Phi(\pp)$ is proper.

\begin{lemma}
For any $\pp\in \KK(\infty)$, we have $\Phi(\pp) \in \YY(\infty)$.
\end{lemma}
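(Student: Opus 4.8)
The plan is to check the three defining conditions of $\YY(\infty)$ for $Y := \Phi(\pp)$: that $Y$ is a generalized Young wall, that it is proper, and that it is reduced. The first two come for free from the construction. By \eqref{buildY} each part of $\pp$ is assigned a row length together with a residue class modulo $n+1$ for the position of that row, and any such collection of rows can be realized on the board \eqref{GYWboard}, since the colors of the boxes are dictated by the board and the right-to-left stacking inside a row is automatic. Arranging the rows so that, within each fixed residue class, their lengths weakly decrease as the position increases is exactly the recipe stated after \eqref{buildY}, and it makes $Y$ proper. Thus only reducedness requires work.

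First I would record a row-theoretic reformulation of the removable-$\delta$ condition. Let $a_i(k)$ denote the number of $i$-colored boxes in column $k$ and $N_r$ the length of row $r$. Since the box at height $h$ in column $k$ has color $h-k \bmod n+1$, an $i$-colored box in column $k$ occupies a row $r \equiv i+k \pmod{n+1}$ with $N_r \ge k$, while an $(i-1)$-colored box in column $k+1$ occupies a row $r \equiv i+k \pmod{n+1}$ with $N_r \ge k+1$. Subtracting the two counts gives
\[
a_i(k) - a_{i-1}(k+1) = \#\{\, r : N_r = k,\ r \equiv i+k \!\!\pmod{n+1} \,\}.
\]
Hence the defining inequality $a_{i-1}(k+1) < a_i(k)$ holds for every $i \in I$ exactly when each of the $n+1$ residue classes modulo $n+1$ contains a row of length precisely $k$. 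In other words, $Y$ fails to be reduced if and only if some positive integer $L$ is the common length of $n+1$ rows occupying pairwise non-congruent positions.

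With this criterion I would argue by contradiction, mirroring the proof of Lemma \ref{lem-Yp}. Suppose $Y=\Phi(\pp)$ is not reduced, so there are $n+1$ rows of a common length $L$, one in each residue class $0,1,\dots,n$. Reading these rows backward through \eqref{buildY} (together with the residue-$0$ assignment $(k\delta+\alpha_n^{(\ell)}) \mapsto$ a row $\equiv 0$, the counterpart of the last line of \eqref{buildp}) produces $n+1$ parts of $\pp$, all of length $L$ and lying in the $n+1$ distinct classes: if $L=(n+1)k$ these are $\bd^{(k)},(k\delta_1),\dots,(k\delta_n)$, and if $L=(n+1)k+\ell$ with $1\le\ell\le n$ they are $(k\delta+\alpha_i^{(\ell)})$ for all $i\in I$. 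In either case the relations \eqref{reductiondelta} let us fold this collection into a single $\bd^{(L)}$, so $\pp$ contains a removable $\delta$ and is not reduced, contradicting $\pp\in\KK(\infty)$. Therefore $Y$ is reduced, and $\Phi(\pp)\in\YY(\infty)$.

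The hard part is the displayed identity for $a_i(k)-a_{i-1}(k+1)$, since it is what translates the column-based definition of a removable $\delta$ into the row-based language that the maps $\Phi$ and $\Psi$ actually act on; everything afterward is bookkeeping against \eqref{buildY} and \eqref{reductiondelta}. A minor point to settle along the way is the residue-$0$ image of the parts $(k\delta+\alpha_n^{(\ell)})$, which is implicit rather than explicit in \eqref{buildY}; once that is fixed, the correspondence between equal-length rows across all classes and foldable collections of parts closes the argument as a direct analogue of Lemma \ref{lem-Yp}.
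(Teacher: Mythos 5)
Your proposal is correct and follows essentially the same route as the paper: both argue by contradiction that a removable $\delta$ in $Y=\Phi(\pp)$ would, via \eqref{buildY} and \eqref{reductiondelta}, force a removable $\delta$ in $\pp$, which is exactly the correspondence the paper invokes by pointing back to the proof of Lemma~\ref{lem-Yp}. Your displayed identity $a_i(k)-a_{i-1}(k+1)=\#\{\, r : N_r=k,\ r\equiv i+k \pmod{n+1}\,\}$ simply makes explicit the column-to-row translation of the removable-$\delta$ condition that the paper (in both this proof and that of Lemma~\ref{lem-Yp}) leaves implicit, so it is a welcome amplification of the same argument rather than a different one.
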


\begin{proof}
We set $Y = \Phi(\pp)$.  Since $\pp$ is reduced, $\pp$ does not contain a removable $\delta$. Using a similar argument as in the proof of Lemma \ref{lem-Yp}, we see that a removable $\delta$ of $Y$ corresponds to a removable $\delta$ of $\pp$.
 Thus $Y$ does not contain a removable $\delta$, so $Y \in \YY(\infty)$.
\end{proof}

\begin{ex}
Let $\pp = (\alpha_0) + (2\delta + \alpha_1+\alpha_0) + \bd^{(3)} + (\alpha_2)$.  Then
\[
\Phi(\pp) =
\wall{{0},{1,0,2,1,0,2,1,0},{2,1,0,2,1,0,2,1,0},{},{},{2}}\ .
\]
\end{ex}

\begin{prop} \label{prop-bi}
The maps $\Psi$ and $\Phi$ are bijections which are inverses to each other. In particular, we have $\YY(\infty) \cong \KK(\infty)$ as sets.
\end{prop}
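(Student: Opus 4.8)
The plan is to prove the two composite identities $\Phi\circ\Psi=\mathrm{id}_{\YY(\infty)}$ and $\Psi\circ\Phi=\mathrm{id}_{\KK(\infty)}$ directly. Since $\Psi$ and $\Phi$ are already known to be well defined by Lemma~\ref{lem-Yp} and the lemma asserting $\Phi(\pp)\in\YY(\infty)$, establishing these two identities shows that $\Psi$ and $\Phi$ are mutually inverse, whence they are bijections and $\YY(\infty)\cong\KK(\infty)$ as sets.

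The conceptual core is to repackage \eqref{buildp} and \eqref{buildY} as a single bijection $\tau$ between the set of pairs $(r,c)$, where $r\in\ZZ/(n+1)\ZZ$ is a residue and $c\in\ZZ_{>0}$ a positive box count, and the set of parts $S_1\cup\{\bd^{(k)}:k>0\}$. Writing $c=(n+1)k+\ell$ with $0\le\ell\le n$, the pair $(r,c)$ is sent to $(k\delta_r)$ if $\ell=0$ and $r\not\equiv 0$, to $\bd^{(k)}$ if $\ell=0$ and $r\equiv 0$, and to $(k\delta+\alpha_{r-1}^{(\ell)})$ (with $r-1\equiv n$ when $r\equiv 0$) if $\ell\neq 0$. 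I would first check that $\tau$ is a bijection: the residue $c\bmod(n+1)$ recovers $\ell$ and hence the family to which the part belongs, and then $r$ and $k$ are determined, so distinct pairs give distinct parts and every part arises. Under $\tau$, the map $\Psi$ records for each nonempty row of $Y$ the pair (residue of its position, number of its boxes) and returns the corresponding parts, while $\Phi$ applies $\tau^{-1}$ and then lays out the resulting rows within each residue class.

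For $\Phi\circ\Psi=\mathrm{id}$, the point is that $\Psi$ forgets only the vertical positions of the rows within each residue class, and properness recovers them. Indeed, the definition of proper says exactly that for rows in the same residue class the box counts weakly decrease as the row index grows; this also forbids gaps, since an empty row $\ell$ together with a nonempty row $\ell+(n+1)$ in its class would violate $N_{\ell+(n+1)}(Y)\le N_\ell(Y)=0$. Hence the nonempty rows of $Y$ in a fixed residue class occupy precisely the bottom positions in weakly decreasing order, which is exactly how $\Phi$ re-lays them, so $\Phi(\Psi(Y))=Y$. For $\Psi\circ\Phi=\mathrm{id}$, I would observe that $\Psi$ reads each row using only its residue and box count, never its exact height; since $\Phi$ produces, for each part of $\pp$ counted with multiplicity, one row with the $(r,c)$ prescribed by $\tau^{-1}$, applying $\Psi$ evaluates $\tau\circ\tau^{-1}$ on each row and returns the original multiset of parts $\pp$. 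The computation factors over residue classes, which keeps the bookkeeping clean; the only care needed is to keep the two residue-$0$ cases apart, namely $\bd^{(k)}$ versus $(k\delta+\alpha_n^{(\ell)})$, which are separated by whether $c\equiv 0\pmod{n+1}$.

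The step I expect to be the main obstacle is the verification that the weakly decreasing re-laying in $\Phi$ reproduces $Y$ on the nose, that is, that properness yields weakly decreasing box counts with no gaps within each residue class and that $\Psi$ is genuinely insensitive to the exact heights. This is bookkeeping rather than a deep difficulty, and reducedness imposes no extra burden here, since the reducedness of the outputs is already guaranteed by the two well-definedness lemmas; the present proposition is then a matter of confirming that $\tau$ turns the round trip into the identity in both directions.
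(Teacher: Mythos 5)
Your proposal is correct and takes essentially the same route as the paper's own proof: both verify that $\Phi\circ\Psi$ and $\Psi\circ\Phi$ are identities by observing that \eqref{buildp} and \eqref{buildY} are inverse correspondences between (residue class, box count) data and parts, and that properness uniquely determines how rows are arranged within each residue class. Your explicit bijection $\tau$ and the no-gaps check simply spell out details the paper leaves implicit.
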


The existence of a bijection is guaranteed by the theory of Kostant partitions and crystal bases. The importance of the proposition is that we have constructed an explicit, combinatorial description of a bijection.

\begin{proof}
Assume that $Y \in \YY(\infty)$. It is enough to check that a row $j$ of $Y$ is mapped onto the same stack of boxes in a row $\equiv j$ (mod $n+1$) by $\Phi \circ \Psi$, since the rows are arranged uniquely so that the number of boxes in each row of the form $(n+1)k+j$ for a fixed $j$ is weakly decreasing as $k$ increases. It follows from \eqref{buildp} and \eqref{buildY} that  a row $j$ of $Y$ is mapped onto the same stack of boxes in a row $\equiv j$ (mod $n+1$).

Conversely, assume that $\pp \in \KK(\infty)$. It is enough to check that each part of $\pp$ is mapped onto itself through $\Psi \circ \Phi$. Using \eqref{buildp} and \eqref{buildY}, we see that it is the case.
\end{proof}

\begin{remark}
While one may define a crystal structure on $\KK(\infty)$ directly in order to show that the bijection in Proposition \ref{prop-bi} is a crystal isomorphism, the bijection given is very explicit and easily understood, so one my simply pull back the crystal structure on $\YY(\infty)$ to $\KK(\infty)$ in order to obtain a crystal isomorphism.
\end{remark}

For $1 \le j \le n+1$ and $Y\in \YY(\infty)$, define $S_j(Y)$ be the set of distinct $N_{(n+1)m+j}(Y)$'s for $m \ge 0$; {\it i.e.},
set
\[
S_j(Y) = \bigcup_{m\ge 0} \big\{ N_{(n+1)m+j}(Y) \big\}.
\]
When $j=n+1$, for each $m\ge 0$, define $(p_m, q_m) \in \ZZ_{\ge 0} \times \ZZ_{\ge 0}$ by
\[N_{(n+1)(m+1)}(Y) = (n+1)^{p_m}q_m ,\] with $q_m$ not divisible by $n+1$. If $N_{(n+1)(m+1)}(Y) =0$, then we put $(p_m, q_m)=(0,0)$. We set
\[
\mathscr Q(Y) = \Big( \bigcup_{m\ge 0} \{ (n+1)^sq_m : s=0,1,\dots,p_m-1\} \Big) \cup \{0\},
\]
and let
\[
\mathscr P(Y) = \sum_{\substack{t\ge 1\\ (n+1) \nmid t}} \max \left\{ p_m : q_m=t,\ m\ge 0  \right\}.
\]
Define
\begin{equation}\label{d-def}
\N(Y) = n\mathscr P(Y) +  \sum_{j=1}^{n+1} \#\big(S_j(Y)\setminus \mathscr Q(Y)\big).
\end{equation}

\begin{prop} \label{prop-count}
Assume that $Y \in \YY(\infty)$, and let $\pp= (\phi \circ \Psi) (Y) \in \KK$, where $\phi$ is the unfolding map defined in the proof of Lemma \ref{lem-unf}.
 Then $\N(Y)$ is equal to the number of distinct parts in the Kostant partition $\pp$; i.e., we have $\N(Y)= \N(\pp)$.
\end{prop}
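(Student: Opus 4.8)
The plan is to compute $\N(\pp)$ directly by describing every part of $\pp=(\phi\circ\Psi)(Y)$ in terms of the row-length data $N_k(Y)$, and then to reorganize that count into the shape of \eqref{d-def}. By \eqref{buildp} each nonempty row of $Y$ produces a part of $\Psi(Y)\in\KK(\infty)$ determined by the residue class $j$ of the row and its length, and the only parts that are not already in $S_1$ are the $\bd^{(k)}$ coming from rows in a position $\equiv 0\pmod{n+1}$ whose length is divisible by $n+1$. First I would apply Lemma \ref{lem-more} to unfold each such $\bd^{(k)}$: writing a residue-$0$ row length as $(n+1)^{p_m}q_m$ with $q_m=(n+1)r+s$ prime to $n+1$, this row contributes the $n+1$ real parts $(r\delta+\alpha_i^{(s)})$, $0\le i\le n$, together with the imaginary parts $((n+1)^iq_m)\delta_\ell$ for $0\le i\le p_m-2$ and $1\le\ell\le n$. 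Since $S_1$ is linearly independent, two parts of $\pp$ coincide exactly when they have the same type (real/imaginary), the same color, and the same length/multiple, so $\N(\pp)$ is a count of distinct such data.

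Second, I would split $\N(\pp)=R+M$ into distinct real parts $R$ and distinct imaginary parts $M$ and express each in Young-wall terms. A real part $(c\delta+\alpha_i^{(\ell)})$ of color $i$ is determined by its length $(n+1)c+\ell$, and such parts arise either directly (from rows of the residue class attached to $i$ whose length is not divisible by $n+1$) or from unfolding, which contributes precisely the lengths $q_m$, i.e.\ the same set $D:=\{q_m:p_m\ge1\}$ in every color. This gives $R=\sum_{j=1}^{n+1}\#(S_j^{\circ}\cup D)$, where $S_j^{\circ}$ is the set of lengths in $S_j(Y)$ not divisible by $n+1$. Likewise, for $1\le j\le n$ an imaginary part $(m\delta_j)$ arises directly (from residue-$j$ rows of length $(n+1)m$, whose set of multiples I call $A_j$) or from unfolding (the multiples $B:=\bigcup_m\{(n+1)^iq_m:0\le i\le p_m-2\}$, again common to all colors), so $M=\sum_{j=1}^{n}\#(A_j\cup B)$. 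The structural fact used throughout is that distinct $q_m$, being prime to $n+1$, have disjoint $(n+1)$-power orbits, so all these unions split cleanly along the base value $t=q_m$.

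Third, I would rewrite the right-hand side of \eqref{d-def} in the same language. A base-by-base count gives $\#B=\sum_t(P_t-1)$ and $\#D=\#\{t:P_t\ge1\}$, where $P_t=\max\{p_m:q_m=t\}$ and both sums run over bases $t$ with $P_t\ge1$; hence $\mathscr P(Y)=\sum_t P_t=\#B+\#D$ and $n\mathscr P(Y)=n\#B+n\#D$. Moreover $\mathscr Q(Y)$ decomposes as the disjoint union of $\{0\}$, the non-divisible set $D$, and the divisible set $F:=\{(n+1)^it:1\le i\le P_t-1,\ t\in D\}$. Splitting $\#(S_j\setminus\mathscr Q(Y))$ into its non-divisible and divisible contributions then yields $\#(S_j^{\circ}\setminus D)+\#(S_j^{\bullet}\setminus F)$, where $S_j^{\bullet}$ denotes the positive divisible lengths in $S_j(Y)$.

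Finally I would match the two expressions. The map $v\mapsto v/(n+1)$ is a bijection $S_j^{\bullet}\to A_j$ carrying $S_j^{\bullet}\cap F$ onto $A_j\cap B$, since $v\in F$ iff $v/(n+1)\in B$; hence $\#(S_j^{\bullet}\setminus F)=\#(A_j\setminus B)$ for $1\le j\le n$. For $j=n+1$ the set $S_{n+1}^{\bullet}\setminus F$ consists of exactly the tallest residue-$0$ row length $(n+1)^{P_t}t$ for each base $t$, so $\#(S_{n+1}^{\bullet}\setminus F)=\#D$. Feeding these back, together with $\#(S_j^{\circ}\cup D)=\#D+\#(S_j^{\circ}\setminus D)$ and $\#(A_j\cup B)=\#B+\#(A_j\setminus B)$, collapses $n\mathscr P(Y)+\sum_{j}\#(S_j\setminus\mathscr Q(Y))$ to $R+M$. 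The main obstacle is precisely this bookkeeping of overlaps: a directly produced real (resp.\ imaginary) part can coincide with an unfolded one exactly when a row length lies in $D$ (resp.\ a multiple lies in $B$), and the role of $\mathscr Q(Y)$ in \eqref{d-def} is to delete each such coincidence once and only once. Verifying, via the two bijective identities above, that it does so is the technical heart of the argument.
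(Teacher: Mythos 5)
Your proof is correct, and it takes a genuinely different route from the paper's. Both arguments rest on the same two pillars, namely the unfolding formula of Lemma \ref{lem-more} and the fact that the $(n+1)$-power orbits $\{(n+1)^i t\}_{i \ge 0}$ of distinct bases $t$ not divisible by $n+1$ are pairwise disjoint; but the paper proves the identity by a four-step case analysis (no $\bd^{(k)}$ present; exactly one; several $\bd^{(k)}$'s sharing a common base $q_m$; then the general case by grouping the $p_m$'s according to their $q_m$), checking in each case that the count of distinct parts collapses to \eqref{d-def}, whereas you prove it in a single pass: you classify every part of $\pp$ by type, color, and length, obtaining $\N(\pp)=\sum_{j=1}^{n+1}\#(S_j^\circ\cup D)+\sum_{j=1}^{n}\#(A_j\cup B)$, and then match this against \eqref{d-def} through the decompositions $\mathscr Q(Y)=\{0\}\sqcup D\sqcup F$ and $\mathscr P(Y)=\#B+\#D$ together with the two bijective identities $\#\bigl(S_j^\bullet\setminus F\bigr)=\#\bigl(A_j\setminus B\bigr)$ for $1\le j\le n$ (via $v\mapsto v/(n+1)$, using $F=(n+1)B$) and $\#\bigl(S_{n+1}^\bullet\setminus F\bigr)=\#D$. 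I checked these identities and the final collapse: both sides equal $n\#B+(n+1)\#D+\sum_{j=1}^{n+1}\#\bigl(S_j^\circ\setminus D\bigr)+\sum_{j=1}^{n}\#\bigl(A_j\setminus B\bigr)$, so the bookkeeping closes. What your route buys is uniformity and rigor in the general case: no grouping or reduction to special cases is needed, and the precise sense in which $\mathscr Q(Y)$ deletes each coincidence between a directly-produced part and an unfolded part exactly once is made explicit---this is arguably tighter than the paper's Steps 3--4, which are stated quite tersely. What the paper's route buys is readability: the stepwise escalation makes the mechanism easy to follow on examples before the general case. One small correction: the $q_m$ are \emph{not divisible by} $n+1$ rather than \emph{prime to} $n+1$ (consider $n+1=4$, $q_m=2$); fortunately the disjointness of power orbits needs only non-divisibility, so nothing in your argument breaks.
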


Before we prove this proposition, we provide a pair of examples. In the first example, we do not have $\bd^{(k)}$ in $\Psi (Y)$, and in the second example, we have $\bd^{(k)}$ in $\Psi (Y)$. We will see how the formula for $\N(Y)$ works.

\begin{ex}\label{ex-nosplit}
Suppose that
\[
Y = \wall{{0,2,1,0,2},{1,0,2},{2},{},{1}}\ .
\]
Then $\pp=(\phi \circ \Psi)(Y)= (\delta+\alpha_0+\alpha_1) + (\delta_2) + (\alpha_2) + (\alpha_1)$, and the number of distinct parts is $4$. On the other hand,

\[
S_1(Y) = \{5, 0\},\ \ \
S_2(Y) = \{3,1,0\},\ \ \
S_3(Y) = \{1,0\}.
\]
Now setting $N_{3(m+1)}(Y) = 3^{p_m}q_m$ implies
$(p_0, q_0)=(0,1)$ and $(p_m,q_m)=(0,0)$ for $m \ge 1$.
Thus $\mathscr Q(Y) = \{0\}$ and $\mathscr P(Y) = 0$.  So
\[
\N(Y) = 1 + 2 + 1 + 2\cdot 0 = 4.
\]
\end{ex}

\begin{ex}\label{ex-split}
Suppose that
\[
Y = \wall{{0,2,1},{1,0},{2,1,0,2,1,0,2,1,0}, {},{},{2,1,0,2,1,0}}\ .
\]
Then we have
\begin{align*}
\pp &=  (\phi \circ \Psi)(Y)= \phi \left ( (\delta_1)+(\alpha_0+\alpha_1)+\bd^{(3)}+\bd^{(2)} \right ) \\
&= (\delta_1)+(\alpha_0+\alpha_1) + (\alpha_0) + (\alpha_1) +(\alpha_2) + (\delta_1) + (\delta_2) + (\alpha_0+\alpha_2) \\
& \ \ \ \ \ \ \ \ + (\alpha_1+\alpha_0) + (\alpha_2+\alpha_1) \\
&= 2(\alpha_1+\alpha_0) +(\alpha_0+\alpha_2) + (\alpha_2+\alpha_1) +  2(\delta_1) + (\delta_2) + (\alpha_0) + (\alpha_1) +(\alpha_2).
\end{align*}
Hence the number of distinct parts is $8$. On the other hand, we get
\[
S_1(Y) = \{3,0\},\ \ \
S_2(Y) = \{2,0\},\ \ \
S_3(Y) = \{9,6,0\}.
\]
From $N_{3(m+1)}(Y) = 3^{p_m}q_m$, we obtain $(p_0,q_0)=(2,1)$, $(p_1,q_1)=(1,2)$ and $(p_m,q_m)=(0,0)$ for $m \ge 2$.
Then $\mathscr Q(Y)= \{ 1, 3, 2 ,0\}$ and $\mathscr P(Y)= 2+1=3$.
So
\[
\N(Y) = 0+ 0+ 2 + 2\cdot 3 = 8.
\]

\end{ex}

\begin{proof}[Proof of Proposition \ref{prop-count}] \hfill

\underline{Step 1}: Assume that $p_m=0$ for all $m \ge 0$. Then $\Psi(Y)$ has no $\bd^{(k)}$, or equivalently, $Y$ is such that $N_{(n+1)(m+1)}(Y) \neq (n+1)k$ for any $m\ge 0$ and $k\ge 1$.  Then $(\phi \circ \Psi)(Y)= \Psi(Y)$ as $\Psi(Y)$ does not have a $\bd^{(k)}$. On the other hand, since $p_m=0$ for all $m\ge 0$, we have $\mathscr Q(Y) = \{0\}$ and $\mathscr P(Y) = 0$.  Hence
\[
\N(Y) = \sum_{j=1}^{n+1} \# \big (S_j(Y) \setminus \{ 0 \} \big).
\]
For each $1 \le j \le n+1$, define $R_j(Y)$ to be the collection of $k$th rows of $Y$ with $k \equiv j$ (mod $n+1)$. From \eqref{buildp}, we see that two nonempty rows $y_1, y_2 \in R_j(Y)$ correspond to distinct parts in $\Psi(Y)$ if and only if the lengths of $y_1$ and $y_2$ are different. Since $\# \left (S_j(Y) \setminus \{ 0 \} \right )$ is the number of distinct nonzero lengths of rows in $R_j(Y)$, it is equal to the number of distinct parts in $\Psi(Y)$ corresponding to $R_j(Y)$. Furthermore, if $j \neq j'$, then $y \in R_j(Y)$ and $y' \in R_{j'}(Y)$ correspond to distinct parts in $\Psi(Y)$. Thus $\N(Y)$ is the total number of distinct parts in $\Psi(Y)=(\phi \circ \Psi)(Y)$, as required.

\underline{Step 2}: Now assume that $p_m \ge 1$ for some $m$ and $p_{m'}=0$ for all $m' \neq m$. From the definition $N_{(n+1)(m+1)}(Y)=(n+1)^{p_m}q_m$, we see that the row $(n+1)(m+1)$ has $(n+1)^{p_m}q_m$ boxes, and the corresponding part in $\Psi(Y)$ is $\bd^{((n+1)^{p_m-1}q_m)}$.  We obtain from Lemma \ref{lem-more}
\begin{equation} \label{eqn-i} \phi\big(\bd^{((n+1)^{p_m-1}q_m)}\big) = \sum_{j=1}^{n+1} (r_m \delta  + \alpha_{j-1}^{(s_m)}) + \sum_{i=0}^{p_m-2} \Big(\sum_{j=1}^n \big((n+1)^{i}q_m \, \delta_j\big)\Big),
\end{equation}
where we write $q_m=(n+1)r_m+s_m$, $1 \le s_m \le n$. Thus $\phi \big ( \bd^{((n+1)^{p_m-1}q_m)} \big )$ has $np_m+1$ distinct parts, some of which may be the same as other parts in $\Psi(Y)$.  It follows from \eqref{buildp} that the part $(r_m \delta + \alpha_{j-1}^{(s_m)})$ corresponds to $q_m$ boxes in a row $\equiv j$ (mod $n+1$) for $1 \le j \le n+1$. Similarly, the part $((n+1)^i q_m \, \delta_j)$ corresponds to $(n+1)^{i+1} q_m$ boxes in a row $\equiv j$ (mod $n+1$) for $1 \le j \le n$ and $0 \le  i \le p_m-2$. Then the number of distinct parts in $(\phi \circ \Psi)(Y)$ is
\begin{multline}
np_m+1 + \sum_{j=1}^n \# \bigl(S_j(Y) \setminus \{ 0 ,(n+1)^i q_m \}_{0 \le i \le p_m-1 }\bigr)+ \# \big(S_{n+1}(Y) \setminus \{ 0 , q_m, (n+1)^{p_{m}}q_m \}\big) \\
= np_m + \sum_{j=1}^n \# \big(S_j(Y) \setminus \{ 0 ,(n+1)^i q_m \}_{0 \le i \le p_m-1 }\big)+ \# \big(S_{n+1}(Y) \setminus \{ 0 , q_m \}\big).  \label{eqn-ii}
 \end{multline}
Since $S_{n+1}(Y)$ does not contain $(n+1)^{i}q_m$, $1 \le i \le p_m-1$, by the assumption, the expression \eqref{eqn-ii} is equal to
\begin{align*}
np_m &+ \sum_{j=1}^n \# \big(S_j(Y) \setminus \{ 0 ,(n+1)^i q_m \}_{0 \le i \le p_m-1 }\big)\\
&\ \ \ \ \ \ \ \ \ + \# \big(S_{n+1}(Y) \setminus \{ 0 , (n+1)^i q_m \}_{0 \le i \le p_m-1 }\big) \\
&=
np_m + \sum_{j=1}^{n+1} \# \big(S_j(Y) \setminus \{ 0 ,(n+1)^i q_m \}_{0 \le i \le p_m-1 }\big) \\
&= np_m + \sum_{j=1}^{n+1} \# \big(S_j(Y) \setminus \mathscr Q(Y) \big) \\
&= \N(Y) .
\end{align*}
Thus the number of distinct parts in $(\phi \circ \Psi)(Y)$ is $\N(Y)$.

\underline{Step 3}: Next we assume $p_m = \max \{ p_{m'} : m' \ge 0 \}$ and  $q_m=q_{m'}$ for any $p_{m'} \ge 1$.
We have $\bd^{((n+1)^{p_{m'}-1}q_{m'})}$ in $\Psi(Y)$ for each $p_{m'} \ge 1$, and each $\phi(\bd^{((n+1)^{p_{m'}-1}q_{m'}) })$ yields $np_{m'}+1$ parts as in \eqref{eqn-i}. However, we can see from \eqref{eqn-i} that $\phi(\bd^{(n+1)^{p_m-1}q_m})$ with the maximal $p_m$ generates all the distinct parts including those from other $p_{m'}$, since $q_m=q_{m'}$ for all $p_{m'}\ge 1$ by the assumption. Then the number of distinct parts in $(\phi \circ \Psi)(Y)$ is given by
\begin{align*}
np_m+1 &+ \sum_{j=1}^n \# \big(S_j(Y) \setminus \{ 0 ,(n+1)^i q_m \}_{0 \le i \le p_m-1 }\big)\\
&\ \ \ \ \ \ \ \ \  + \# \big(S_{n+1}(Y) \setminus \{ 0 , q_m, (n+1)^{p_{m'}}q_m \}_{1 \le p_{m'} \le p_m} \big) \\
&= np_m + \sum_{j=1}^n \# \big(S_j(Y) \setminus \{ 0 ,(n+1)^i q_m \}_{0 \le i \le p_m-1 }\big)\\
&\ \ \ \ \ \ \ \ \  + \# \big(S_{n+1}(Y) \setminus \{ 0 , (n+1)^i q_m \}_{0 \le i \le p_m-1 }\big) \\
&=  np_m + \sum_{j=1}^{n+1} \# \big(S_j(Y) \setminus \mathscr Q(Y) \big) = \N(Y) .
\end{align*}

\underline{Step 4}:  Finally we consider the general case. We group $p_{m}$'s using the rule that $p_m$ and $p_{m'}$ are in the same group if and only if $q_m=q_{m'}$. For each of such groups, we use the result in Step 3, and see that the number of distict parts in $(\phi \circ \Psi)(Y)$ is equal to
\[
n \mathscr P(Y) + \sum_{j=1}^{n+1} \# \big( S_j(Y) \setminus \mathscr Q(Y)\big),
\]
recalling the definitions
\begin{align*}
\mathscr P(Y) &= \sum_{\substack{t\ge 1\\ (n+1) \nmid t}} \max \left\{ p_m : q_m=t,\ m\ge 0  \right\}, \\
\mathscr Q(Y) &= \Big( \bigcup_{m\ge 0} \{ (n+1)^sq_m : s=0,1,\dots,p_m-1\} \Big) \cup \{0\} .
\end{align*}
Hence the number of distinct parts in $(\phi \circ \Psi)(Y)$ is $\N(Y)$.
\end{proof}

The rule for calculating the number $\N(Y)$, for $Y\in \YY(\infty)$, may be reinterpreted using the following algorithm.  For this algorithm, we say that two rows in $Y$ are {\it distinct} if their rightmost boxes are different or if their rightmost boxes are equal but they have an unequal number of boxes.

\begin{alg}
Define a  map $\psi_\YY$ on $\YY(\infty)$ as follows.
\begin{enumerate}
\item If $Y$ has no row with rightmost box $n$ and length $\equiv 0 \bmod n+1$, then $\psi_\YY(Y) := Y$.
\item If $Y$ has at least one row with rightmost box $n$ and length $(n+1)\ell$, then replace any row with maximal such $\ell$ with $n+1$ distinct rows of length $\ell$.  Rearrange all rows (if necessary) so that it is proper.  This gives $\psi_\YY^{(\ell)}(Y)$.
\item Apply Step 2 with $\ell$ replaced by $\ell-1$ and $Y$ replaced by $\psi_\YY^{(\ell)}(Y)$.  This gives $\psi_\YY^{(\ell-1)}(Y)$.
\item Iterate this process until $\ell=1$.  Then $\psi_\YY(Y) = \psi_\YY^{(1)}(Y)$.   
\end{enumerate}
Note that $\psi_\YY(Y)$ is proper, but need not be reduced, so $\psi_\YY(Y) \notin \YY(\infty)$ in general.   Then $\N(Y)$ is the number of distinct rows in $\psi_\YY(Y)$.
\end{alg}

\begin{ex}\label{ex-splitalg}
Let $n=2$ and let $Y$ be as in Example \ref{ex-split}.  Then
\[
Y = \wall{{0,2,1},{1,0},{\color{red}2,\color{red}1,\color{red}0,\color{red}2,\color{red}1,\color{red}0,\color{red}2,\color{red}1,\color{red}0}, {},{},{\color{blue}2,\color{blue}1,\color{blue}0,\color{blue}2,\color{blue}1,\color{blue}0}}
\  \leadsto \ 
\wall{{\color{red}0,\color{red}2,\color{red}1},{\color{red}1,\color{red}0,\color{red}2},{\color{red}2,\color{red}1,\color{red}0},{0,2,1},{1,0},{\color{blue}2,\color{blue}1},{\color{blue}0,\color{blue}2},{\color{blue}1,\color{blue}0}}
\  \leadsto \ 
\wall{{\color{red}0,\color{red}2,\color{red}1},{\color{red}1,\color{red}0,\color{red}2},{\color{blue}2,\color{blue}1},{0,2,1},{\color{blue}1,\color{blue}0},{\color{red}2},{\color{blue}0,\color{blue}2},{1,0},{},{\color{red}0},{\color{red}1}}\ = \psi_\YY(Y).
\]
Counting the number of distinct rows gives $8 = \N(Y)$.
\end{ex}

Let $W$ be the Weyl group of $\mathfrak g$ and $s_i$ ($i \in I$) be the simple reflections. We fix $\bm h =( \dots , i_{-1}, i_0, i_1, \dots)$ as in Section 3.1 in \cite{BeckNa}.
 Then for any integers $m < k$, the product $s_{i_m} s_{i_{m+1}} \cdots s_{i_k} \in W$ is a reduced expression, so is the product  $s_{i_k} s_{i_{k-1}} \cdots s_{i_m} \in W$. We set
 \begin{equation} \label{beta} \beta_k = \begin{cases} s_{i_0}s_{i_{-1}} \cdots s_{i_{k+1}} ( \alpha_{i_k}) & \text{ if } k \le 0 , \\ s_{i_1}s_{i_{2}} \cdots s_{i_{k-1}} ( \alpha_{i_k}) & \text{ if } k > 0 . \end{cases} \end{equation}

Let $T_i=T''_{i, 1}$ be the automorphism of $U_v(\mathfrak{g})$ as in Section 37.1.3. of \cite{Luszt:93}, and let
\[
\cc_+= (c_0, c_{-1}, c_{-2} , \dots ) \in \ZZ_{\ge 0}^{\ZZ_{\le 0}}
\quad \text{ and } \quad
\cc_- = (c_1, c_2, \dots ) \in \ZZ_{\ge 0}^{\ZZ_{> 0} }
\]
be functions (or sequences) that are almost everywhere zero. We denote by $\mathscr C_> $ (resp.\ by $\mathscr C_<$) the set of such functions $\cc_+$ (resp.\ $\cc_-$). For an element $\cc_+=(c_0, c_{-1},  \dots ) \in \mathscr C_>$ (resp.\ $\cc_-=(c_1, c_2, \dots ) \in \mathscr C_>$), we define
\[
E_{\cc_+} = E_{i_0}^{(c_0 )} T^{-1}_{i_0} \bigl ( E_{i_{-1}}^{( c_{-1} )} \bigr )  T^{-1}_{i_0}  T^{-1}_{i_{-1}}  \bigl ( E_{i_{-2} }^{(  c_{-2}  )} \bigr ) \cdots
\]
and
\[
E_{\cc_-} = \cdots T_{i_1}  T_{i_{2}}  \bigl ( E_{i_{3} }^{(  c_{3}  )} \bigr) T_{i_1} \bigl ( E_{i_{2}}^{( c_{2} )} \bigr )   E_{i_1}^{(c_1 )}  .
\]
We also define $\N(\cc_+)$ (resp.\ $\N(\cc_-)$) to be the number of nonzero $c_i$'s in $\cc_+$ (resp.\ $\cc_-$).

Let $\cc_0 = (\rho^{(1)}, \rho^{(2)}, \dots , \rho^{(n)} )$ be a multi-partition with $n$ components; {\it i.e.}, each component $\rho^{(i)}$ is a partition. We denote by $\mathcal P(n)$ the set of all multi-partitions with $n$ components.
Let $S_{\cc_0}$ be defined as in \cite[p.~352]{BeckNa} for $\cc_0 \in \mathcal P(n)$.
For a partition $\bm p = (1^{m_1}2^{m_2} \cdots r^{m_r} \cdots)$, we define
\begin{equation} \label{eqn-q}
\N(\bm p) = \#\{ r : m_r \neq 0 \}
\quad \text { and } \quad
|\bm p|= m_1+2 m_2+ 3 m_3 + \cdots .
\end{equation}
Then for a multi-partition $\cc_0 = (\rho^{(1)}, \rho^{(2)}, \dots , \rho^{(n)} ) \in \mathcal P(n)$, we set
\[
\N(\cc_0)= \N(\rho^{(1)} ) + \N(\rho^{(2)} ) + \cdots + \N(\rho^{(n)} ).
\]

Let $\mathscr C= \mathscr C_> \times \mathcal P(n) \times \mathscr C_<$.
We denote by $\CB$ the Kashiwara-Lusztig's canonical basis for $U_v^+(\mathfrak g)$, the positive part of the quantum affine algebra.

\begin{thm}[\cite{BCP, BeckNa}]\label{thm-BeNa}
There is a bijection $\eta \colon \CB \longrightarrow \mathscr C$ such that for each $\cc = (\cc_+, \cc_0, \cc_-) \in \mathscr C$, there exists a unique $b \in \CB$ satisfying
\begin{equation}\label{eqn-ec-final}
b \equiv E_{\cc_+} S_{\cc_{0} } E_{\cc_-} \bmod v^{-1} \ZZ[v^{-1}].
\end{equation}
\end{thm}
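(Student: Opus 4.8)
The statement is precisely the main structural result of Beck--Chari--Pressley \cite{BCP} and Beck--Nakajima \cite{BeckNa}, so the plan is to reconstruct their argument rather than to find something genuinely new. The strategy proceeds in two stages: first build an explicit PBW-type basis of $U_v^+(\mathfrak g)$ indexed by $\mathscr C$, and then apply Lusztig's standard existence-and-uniqueness argument to match each PBW monomial with a canonical basis element modulo $v^{-1}\ZZ[v^{-1}]$. For the first stage, the doubly-infinite sequence $\bm h$ fixed above determines a convex order on $\Delta^+$ in which the real positive roots are enumerated as the $\beta_k$ of \eqref{beta}, with the families $\{\beta_k : k\le 0\}$ and $\{\beta_k : k>0\}$ accumulating from the two sides toward the imaginary roots $m\delta$ occupying the center. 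Lusztig's braid automorphisms $T_i$ then produce real root vectors, and taking divided powers along the two half-infinite tails yields the monomials $E_{\cc_+}$ and $E_{\cc_-}$ for $\cc_+ \in \mathscr C_>$ and $\cc_- \in \mathscr C_<$.

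The delicate part is the imaginary contribution. Since each imaginary root $m\delta$ has multiplicity $n$, the imaginary root vectors span an $n$-fold family at every level, and the correct parametrization of a basis for this part is by multipartitions $\cc_0 \in \mathcal P(n)$; the element $S_{\cc_0}$ is the corresponding Schur-type combination of imaginary root vectors constructed in \cite[p.~352]{BeckNa}. One must check that these imaginary vectors commute appropriately, are compatible with the bar involution, and interact with the real root vectors so that the ordered products $E_{\cc_+} S_{\cc_0} E_{\cc_-}$ are linearly independent and span. Establishing that this family is indeed a $\QQ(v)$-basis of $U_v^+(\mathfrak g)$, together with its integrality over $\ZZ[v,v^{-1}]$ and a triangularity property for the bar involution, is the heart of \cite{BCP, BeckNa}.

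Granting the PBW basis, the final stage is formal. The $\ZZ[v^{-1}]$-lattice $\mathcal L$ spanned by the monomials $E_{\cc_+} S_{\cc_0} E_{\cc_-}$, the bar involution, and the triangularity of the bar action with respect to a fixed ordering of $\mathscr C$ place us exactly in the setting of Lusztig's lemma on the existence of canonical bases. That lemma produces, for each $\cc = (\cc_+, \cc_0, \cc_-) \in \mathscr C$, a unique bar-invariant element $b \in \CB$ satisfying $b \equiv E_{\cc_+} S_{\cc_0} E_{\cc_-} \bmod v^{-1} \ZZ[v^{-1}]$, and the assignment $\cc \mapsto b$ is the desired bijection $\eta$. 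I expect the main obstacle to lie entirely in the first two stages---specifically in the construction of the imaginary root vectors and the verification of the triangularity needed to run Lusztig's lemma---while the passage from the PBW basis to the canonical basis is routine.
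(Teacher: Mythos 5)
The paper does not prove this theorem at all---it is imported verbatim from Beck--Chari--Pressley and Beck--Nakajima, as the citation bracket indicates---so there is no internal proof to compare yours against. Your outline (PBW basis from the doubly infinite reduced word with real root vectors $E_{\cc_\pm}$ and Schur-type imaginary vectors $S_{\cc_0}$, then integrality, bar-involution triangularity, and Lusztig's lemma) is an accurate reconstruction of the strategy actually carried out in those cited works, so it is the same approach as the proof the theorem rests on.
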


Now the number $\N(\cc)$ is defined by $\N(\cc)=\N(\cc_+)+\N(\cc_0)+\N(\cc_-)$ for each $\cc \in \mathscr C$. Using the canonical basis $\CB$, H.\ Kim and K.-H.\ Lee expanded the product side of the Gindikin-Karpelevich formula as a sum, and obtained the following theorem.

\begin{thm}[\cite{KL:11b}]\label{thm-KL}
We have
\begin{equation} \label{eqn-main-2}
\prod_{\alpha \in \Delta^+} \left ( \frac {1- q^{-1} \zz^\alpha}{1 - \zz^\alpha} \right )^{\mult(\alpha)} =  \sum_{b \in \CB} (1 - q^{-1})^{\N(\eta(b)) } \zz^{\wt(b)}.
\end{equation}
\end{thm}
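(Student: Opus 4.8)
The plan is to expand the left-hand side as an infinite product of rational functions in the $\zz^\alpha$, one factor per positive root, and then re-index the resulting series by the canonical basis using the bijection $\eta$ of Theorem~\ref{thm-BeNa}. First I would invoke the structure theory of Beck--Chari--Pressley and Beck--Nakajima: the doubly-infinite word $\bm h$ induces, via \eqref{beta}, an enumeration of the positive \emph{real} roots by the $\beta_k$ in which those with $k\le 0$ and those with $k>0$ together list every positive real root exactly once, while the imaginary roots $m\delta$, each of multiplicity $n$, sit in the middle of the order. This convexity lets me factor
\[
\prod_{\alpha\in\Delta^+}\left(\frac{1-q^{-1}\zz^\alpha}{1-\zz^\alpha}\right)^{\mult(\alpha)}
= \Bigl(\prod_{k\le 0}\frac{1-q^{-1}\zz^{\beta_k}}{1-\zz^{\beta_k}}\Bigr)
\Bigl(\prod_{m\ge 1}\Bigl(\frac{1-q^{-1}\zz^{m\delta}}{1-\zz^{m\delta}}\Bigr)^{n}\Bigr)
\Bigl(\prod_{k>0}\frac{1-q^{-1}\zz^{\beta_k}}{1-\zz^{\beta_k}}\Bigr),
\]
the three factors matching the three coordinates $\cc_+$, $\cc_0$, $\cc_-$ of $\mathscr C$. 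Because only finitely many terms contribute to each fixed power of $\zz$, this rearrangement is valid as an identity of formal power series.

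Next I would expand each factor. For a single real root $\beta$ the geometric series gives $\frac{1-q^{-1}\zz^{\beta}}{1-\zz^{\beta}} = 1 + (1-q^{-1})\sum_{c\ge 1}\zz^{c\beta}$, so choosing an exponent $c\ge 0$ for each $\beta_k$ and recording these as a sequence produces
\[
\prod_{k\le 0}\frac{1-q^{-1}\zz^{\beta_k}}{1-\zz^{\beta_k}}
= \sum_{\cc_+\in\mathscr C_>}(1-q^{-1})^{\N(\cc_+)}\,\zz^{\sum_{k\le 0}c_k\beta_k},
\]
and likewise for $k>0$ with $\cc_-\in\mathscr C_<$; in each case the exponent of $(1-q^{-1})$ counts exactly the nonzero $c_k$. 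For the imaginary factor I would use the elementary partition identity $\prod_{m\ge 1}\frac{1-q^{-1}x^m}{1-x^m} = \sum_{\lambda}(1-q^{-1})^{\N(\lambda)}x^{|\lambda|}$, where $\N(\lambda)$ is the number of distinct part sizes of $\lambda$ as in \eqref{eqn-q}; raising this to the $n$th power with $x=\zz^{\delta}$ and indexing by multi-partitions $\cc_0\in\mathcal P(n)$ gives $\sum_{\cc_0}(1-q^{-1})^{\N(\cc_0)}\zz^{|\cc_0|\delta}$, with $\N(\cc_0)=\sum_j\N(\rho^{(j)})$.

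Multiplying the three expansions, the left-hand side becomes $\sum_{\cc\in\mathscr C}(1-q^{-1})^{\N(\cc)}\zz^{\wt(\cc)}$, where $\N(\cc)=\N(\cc_+)+\N(\cc_0)+\N(\cc_-)$ and $\wt(\cc)=\sum_{k\le 0}c_k\beta_k + |\cc_0|\delta + \sum_{k>0}c_k\beta_k$. To conclude I would apply Theorem~\ref{thm-BeNa}, which furnishes the bijection $\eta\colon\CB\to\mathscr C$; it then remains only to check that $\wt(b)=\wt(\cc)$ whenever $\eta(b)=\cc$. This is immediate from \eqref{eqn-ec-final}: since $b\equiv E_{\cc_+}S_{\cc_0}E_{\cc_-}\bmod v^{-1}\ZZ[v^{-1}]$ and each $E$- and $S$-factor carries the weight of its associated (real or imaginary) root, the monomial $E_{\cc_+}S_{\cc_0}E_{\cc_-}$, and hence $b$, has weight exactly $\wt(\cc)$. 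Re-indexing the sum by $b=\eta^{-1}(\cc)$ yields the right-hand side of \eqref{eqn-main-2}.

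The combinatorial expansions above are routine; the main obstacle is securing the structural input that makes the factorization legitimate. Concretely, one needs that the $\beta_k$ enumerate the positive real roots bijectively in a convex order adapted to the PBW and canonical bases, that the imaginary root space contributes with the correct multiplicity $n$ at every level $m\delta$, and that the parametrization $\eta$ respects weights through \eqref{eqn-ec-final}. These are exactly the results of Beck--Chari--Pressley and Beck--Nakajima recorded in Theorem~\ref{thm-BeNa}; once they are in hand, the theorem reduces to the bookkeeping of geometric and partition series carried out above.
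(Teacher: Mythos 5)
Your proof is correct, but note that this paper never proves Theorem \ref{thm-KL} itself: the result is quoted as a black box from \cite{KL:11b}, with Theorem \ref{thm-BeNa} recorded only as the structural input behind it. Your argument --- factoring the product along the Beck--Nakajima enumeration $\{\beta_k\}$ of the positive real roots together with the multiplicity-$n$ imaginary roots, expanding the real factors by $\frac{1-q^{-1}\zz^{\beta}}{1-\zz^{\beta}} = 1+(1-q^{-1})\sum_{c\ge 1}\zz^{c\beta}$ and the imaginary factor by the distinct-parts partition identity, and then re-indexing the resulting sum over $\mathscr C$ through the weight-preserving bijection $\eta$ of Theorem \ref{thm-BeNa} --- is essentially the proof given in that reference, so it matches the source's approach; there is simply no internal proof in the present paper to compare it against.
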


\medskip

In the rest of this section, we will prove a combinatorial description of the formula \eqref{eqn-main-2} using the set $\YY(\infty)$ of reduced proper generalized Young walls.

We define a map $\theta\colon \mathcal P(n) \longrightarrow \KK$ as follows.  For $\cc_0=(\rho^{(1)}, \rho^{(2)}, \dots , \rho^{(n)} ) \in \mathcal P(n)$, we define
\[
\theta(\cc_0)= \sum_{i=1}^n m_{1,i} (\delta_i) + m_{2,i}(2\delta_i) +\cdots + m_{r,i}(r\delta_i) +\cdots ,
\]
where $\rho^{(i)} = (1^{m_{1,i}}2^{m_{2,i}} \cdots r^{m_{r,i}} \cdots)$ for $i=1, 2, \ldots , n$.  Then we define a map $\Theta\colon \mathscr C \longrightarrow \KK$ by
\[
\Theta(\cc) = \theta (\cc_0) + \sum_{i \in \ZZ} c_i (\beta_i) ,
\]
where $\cc=(\cc_+, \cc_0, \cc_-)$,  $\cc_+=(c_0, c_{-1}, c_{-2}, \dots)$, $\cc_-=(c_1, c_2, \dots)$ and $\beta_i$ is given by \eqref{beta} with $(\beta_i) \in \mathcal K$.

\begin{cor} \label{cor-k}
The map $\Theta\colon \mathscr C \longrightarrow \KK$ is a bijection, and for $\cc \in \mathscr C$, the number of distinct parts in $\pp = \Theta(\cc)$ is the same as $\N(\cc)$; i.e., $\N(\Theta(\cc))= \N(\cc)$.
\end{cor}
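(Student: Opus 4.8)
The plan is to exploit the fact that $\Theta$ splits into two independent pieces according to the two kinds of generators in $S_1$: the imaginary generators $(m\delta_k)$ and the real-root generators $(c\delta+\alpha_j^{(\ell)})$. Indeed $\theta(\cc_0)$ is a $\ZZ_{\ge 0}$-combination of imaginary generators, whereas $\sum_{i\in\ZZ} c_i(\beta_i)$ is a $\ZZ_{\ge 0}$-combination of real-root generators, since each $\beta_i$ is a positive real root. The key input I would isolate is the standard fact from the theory of PBW bases of affine quantum groups (Beck--Chari--Pressley and Beck--Nakajima, as already invoked in Theorem \ref{thm-BeNa}) that the assignment $i\mapsto\beta_i$ of \eqref{beta} is a bijection from $\ZZ$ onto $\Delta_{\re}^+$, with $\{\beta_i:i\le 0\}$ and $\{\beta_i:i>0\}$ recovering the two halves of $\Delta_{\re}^+$ cut out by the chosen word $\bm h$. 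Combined with the direct observation that the real-root generators of $S_1$, namely $c\delta+\alpha_j^{(\ell)}$ with $c\ge 0$, $j\in I$, $1\le\ell\le n$, are in bijection with $\Delta_{\re}^+$, this identifies $\sum_i c_i(\beta_i)$ with an arbitrary real-root Kostant partition.

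Next I would prove bijectivity of $\Theta$ by exhibiting an explicit inverse. Since $S_1$ is linearly independent, every $\pp\in\KK$ has unique $\ZZ_{\ge 0}$-coefficients, and I can read off its imaginary and real parts separately. The imaginary part $\sum_{i,r} m_{r,i}(r\delta_i)$ recovers a unique multi-partition $\cc_0=(\rho^{(1)},\dots,\rho^{(n)})\in\mathcal P(n)$ with $\rho^{(i)}=(1^{m_{1,i}}2^{m_{2,i}}\cdots)$; this is precisely the inverse of $\theta$, which is manifestly a bijection onto the imaginary Kostant partitions. For the real part, each positive real root $\gamma$ occurring in $\pp$ equals $\beta_i$ for a unique $i\in\ZZ$, so letting $c_i$ be the coefficient of $(\gamma)=(\beta_i)$ produces sequences $\cc_+=(c_0,c_{-1},\dots)$ and $\cc_-=(c_1,c_2,\dots)$ that are almost everywhere zero, as only finitely many parts occur in $\pp$. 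These assignments are mutually inverse to $\Theta$, so $\Theta$ is a bijection.

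For the count, I would use that the imaginary and real-root generators of $S_1$ are disjoint, so the number of distinct parts of $\Theta(\cc)$ is the sum of the numbers contributed by $\theta(\cc_0)$ and by $\sum_i c_i(\beta_i)$. The distinct parts of $\theta(\cc_0)$ are the $(r\delta_i)$ with $m_{r,i}\ne 0$; for fixed $i$ these number $\N(\rho^{(i)})=\#\{r:m_{r,i}\ne 0\}$, and as the $\delta_i$ for $1\le i\le n$ are distinct generators the total is $\sum_{i=1}^n\N(\rho^{(i)})=\N(\cc_0)$. The distinct parts of $\sum_i c_i(\beta_i)$ are the $(\beta_i)$ with $c_i\ne 0$, and because $i\mapsto\beta_i$ is injective their number is the number of nonzero $c_i$, namely $\N(\cc_+)+\N(\cc_-)$. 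Adding, $\N(\Theta(\cc))=\N(\cc_0)+\N(\cc_+)+\N(\cc_-)=\N(\cc)$.

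The step I expect to be the main obstacle is exactly the indexing of the real-root generators: making precise that $i\mapsto\beta_i$ is a bijection $\ZZ\to\Delta_{\re}^+$ and that every positive real root is recorded once among the generators $(c\delta+\alpha_j^{(\ell)})$. Everything else, the multi-partition bookkeeping and the disjointness of the two families, is routine. I would dispatch this obstacle by appealing to the convexity properties of the word $\bm h$ from \cite{BeckNa} together with a direct check that the parametrization $c\delta+\alpha_j^{(\ell)}$ exhausts $\{\alpha+m\delta:\alpha\in\Delta_\cl,\ m>0\}\cup\Delta_\cl^+$ without repetition.
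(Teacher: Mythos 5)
Your proof is correct, and it is essentially a fully unpacked version of an argument the paper leaves almost entirely implicit. The paper's own proof is three sentences long: by Theorem \ref{thm-BeNa} the set $\mathscr C$ parametrizes a PBW-type basis of $U_v^+(\mathfrak g)$, hence also a PBW basis of the classical enveloping algebra $U^+(\mathfrak g)$; since Kostant partitions likewise parametrize a PBW basis of $U^+(\mathfrak g)$ and $\Theta$ is ``defined according to these correspondences,'' $\Theta$ is a bijection; the equality $\N(\Theta(\cc))=\N(\cc)$ then ``follows from the definitions.'' In other words, the paper routes bijectivity through representation theory and never verifies the combinatorial matching. You instead build the inverse of $\Theta$ by hand: you split $\KK$ into its imaginary and real-root generators, use the Beck--Nakajima fact that $i \mapsto \beta_i$ enumerates $\Delta_{\re}^+$ bijectively, check directly that the generators $(c\delta+\alpha_j^{(\ell)})$, $c\ge 0$, exhaust $\Delta_{\re}^+$ without repetition (your cyclic-interval check: segments avoiding $0$ give $\Delta_\cl^+ + \ZZ_{\ge 0}\delta$, segments through $0$ give $-\Delta_\cl^+ + \ZZ_{>0}\delta$), and note that $\theta$ is manifestly a bijection onto the imaginary part. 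That is precisely the content hidden in the paper's phrase ``defined according to these correspondences,'' and your counting argument (disjointness of the two families of generators plus injectivity of $i\mapsto\beta_i$) is the honest expansion of the paper's one-line appeal to the definitions. Both arguments rest on the same external input --- the $\beta_i$ enumeration from \cite{BeckNa} and the multiplicity-$n$ bookkeeping for $m\delta$ encoded by $\delta_1,\dots,\delta_n$ --- but your route buys a self-contained combinatorial proof that does not require knowing that Kostant partitions index a PBW basis of $U^+(\mathfrak g)$, while the paper's route buys brevity and a conceptual explanation of why $\Theta$ is the natural map.
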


\begin{proof}
By Theorem \ref{thm-BeNa}, the set $\mathscr C$ parametrizes a PBW type basis of $U_v^+(\mathfrak g)$. Thus the set $\mathscr C$ also parametrizes a PBW basis of the universal enveloping algebra $U^+(\mathfrak g)$. Now the first assertion follows from the fact that the Kostant partitions parametrize the elements in a PBW basis of $U^+(\mathfrak g)$ and that the function $\Theta$ is defined according to these correspondences. The second assertion follows from the definitions of $\N$ for $\mathscr C$ and $\KK$, respectively.
\end{proof}

\begin{thm}\label{main}
Let $\mathfrak{g}$ be an affine Kac-Moody algebra of type $A_n^{(1)}$.  Then
\begin{equation} \label{eqn-m}
\prod_{\alpha \in \Delta^+} \left ( \frac{1-q^{-1}\zz^{\alpha}}{1-\zz^{\alpha}} \right )^{\mult(\alpha)} = \sum_{Y \in \YY(\infty)} (1-q^{-1})^{\N(Y)}\zz^{-\wt(Y)},
\end{equation}
where $\N(Y)$ is defined in \eqref{d-def}.
\end{thm}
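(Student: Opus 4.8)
The plan is to deduce \eqref{eqn-m} from Theorem \ref{thm-KL} by reindexing its right-hand sum along a composite of the bijections assembled earlier in the paper. Explicitly, I would compose
\[
\CB \xrightarrow{\ \eta\ } \mathscr C \xrightarrow{\ \Theta\ } \KK \xrightarrow{\ \psi\ } \KK(\infty) \xrightarrow{\ \Phi\ } \YY(\infty),
\]
where $\eta$ is the bijection of Theorem \ref{thm-BeNa}, $\Theta$ is the bijection of Corollary \ref{cor-k}, $\psi$ is the reduction map of Lemma \ref{lem-unf}, and $\Phi$ is the bijection of Proposition \ref{prop-bi}. Since each arrow is a bijection, the composite $\Phi\circ\psi\circ\Theta\circ\eta\colon \CB\to\YY(\infty)$ is a bijection, so the sum over $b\in\CB$ in \eqref{eqn-main-2} can be rewritten as a sum over $Y\in\YY(\infty)$. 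It then remains to check that, under this identification, the exponent of $(1-q^{-1})$ and the monomial in $\zz$ transform correctly.

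For the exponent, write $\pp=\Theta(\eta(b))\in\KK$ and $Y=\Phi(\psi(\pp))$. Corollary \ref{cor-k} gives $\N(\eta(b))=\N(\pp)$, so it suffices to show $\N(Y)=\N(\pp)$. Because $\Psi$ and $\Phi$ are mutually inverse (Proposition \ref{prop-bi}) and $\phi$ and $\psi$ are mutually inverse (Lemma \ref{lem-unf}), I compute $(\phi\circ\Psi)(Y)=\phi(\Psi(\Phi(\psi(\pp))))=\phi(\psi(\pp))=\pp$. Proposition \ref{prop-count} then yields $\N(Y)=\N((\phi\circ\Psi)(Y))=\N(\pp)$, so the exponent $\N(\eta(b))$ in \eqref{eqn-main-2} equals $\N(Y)$ exactly. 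For the monomial, I would verify weight preservation one arrow at a time: the PBW datum $\eta(b)$ has total weight $\wt(b)$; $\Theta$ assigns a Kostant partition $\pp$ whose parts sum to the same positive element $\mu=\sum_{\alpha}c_\alpha\alpha$; the relations \eqref{reductiondelta} are weight-homogeneous because each $\bd^{(m)}$ carries weight $m\delta$ equal to that of the parts it replaces, so $\psi$ preserves $\mu$; and $\Phi$ converts each part into rows whose box-counts realize the same root multiplicities, giving $\mu=-\wt(Y)=\sum_i m_i(Y)\alpha_i$. Hence $\wt(b)=-\wt(Y)$ and $\zz^{\wt(b)}=\zz^{-\wt(Y)}$.

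Granting these two verifications, substituting $\N(Y)$ for $\N(\eta(b))$ and $\zz^{-\wt(Y)}$ for $\zz^{\wt(b)}$ in \eqref{eqn-main-2} turns Theorem \ref{thm-KL} into \eqref{eqn-m} term by term. The genuine work of the proof has already been absorbed into Proposition \ref{prop-count} and Corollary \ref{cor-k}, so at this stage the only real obstacle is a consistency check: one must be sure that unfolding $\Psi(Y)$ returns precisely the Kostant partition $\pp$ that $\Theta\circ\eta$ produced, and not a different preimage. This is exactly what the identity $(\phi\circ\Psi)(Y)=\pp$ above guarantees, and it hinges on $\KK(\infty)$ being the common codomain of $\psi$ and $\Psi$ together with the fact that all four maps are honest bijections with the stated inverses.
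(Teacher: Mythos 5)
Your proof is correct and takes essentially the same approach as the paper: both reindex the sum in Theorem \ref{thm-KL} along the composite bijection $\Phi\circ\psi\circ\Theta\circ\eta\colon \CB\to\YY(\infty)$ and then invoke Corollary \ref{cor-k} and Proposition \ref{prop-count} to match the exponents, with the weight identity $\wt(b)=-\wt(Y)$ read off from the constructions. The only difference is one of detail: you make explicit the consistency check $(\phi\circ\Psi)(Y)=\pp$ needed to apply Proposition \ref{prop-count}, which the paper leaves implicit.
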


\begin{proof}
By Lemma \ref{lem-unf}, Proposition \ref{prop-bi}, Theorem \ref{thm-BeNa} and Corollary \ref{cor-k}, we have bijections
\[
\CB \stackrel{\eta}{\longrightarrow}
\mathscr C \stackrel{\Theta}{\longrightarrow}
\KK \stackrel{\psi}{\longrightarrow}
\KK(\infty) \stackrel{\Phi}{\longrightarrow}
\YY(\infty).
\]
For $b \in \CB$, we write $Y=(\Phi \circ \psi \circ \Theta \circ \eta)(b) \in \YY(\infty)$.  Then, by Proposition \ref{prop-count} and Corollary \ref{cor-k}, we have $\N(\eta(b))=\N(Y)$. We also see from the constructions that $\wt(b)=-\wt (Y)$.
Now the equality \eqref{eqn-m} follows from Theorem \ref{thm-KL}.
\end{proof}

\section{Connection to Braverman-Finkelberg-Kazhdan's formula}

We briefly recall the framework of the paper \cite{BFK}. Let $G$ (resp.\ $\widehat G$) be the minimal (resp.\ formal) Kac-Moody group functor attached to a symmetrizable Kac-Moody root datum and let $\mathfrak g$ be the corresponding Lie algebra. There is a natural imbedding $G \hookrightarrow \widehat G$. The group $G$ has the closed subgroup functors $U \subset B$, $U_- \subset B_-$ such that the quotients $B/U$ and $B_-/U_-$ are naturally isomorphic to the Cartan subgroup $H$ of $G$. We denote by
$\widehat B$ and $\widehat U$ the closures of $B$ and $U$ in $\widehat G$, respectively. We will denote the coroot lattice of $G$ by $\Lambda$ and the set of positive coroots by $R^+ \subset \Lambda$. The subsemigroup of $\Lambda$ generated by $R^+$ will be denoted by $\Lambda^+$. For an element $\gamma=\sum a_i \alpha_i^\vee \in \Lambda^+$ with simple coroots $\alpha_i^\vee$, we write $|\gamma|= \sum a_i$.
We assume that $G$ is ``simply connected"; i.e., the lattice $\Lambda$ is equal to the cocharacter lattice of $H$.

We set $\mathcal F = \mathbf F_q((t))$ and $\mathcal O = \mathbf F_q[[t]]$, where $\mathbf F_q$ is the finite field with $q$ elements. We let $\mathrm{Gr}=\widehat G (\mathcal F) / \widehat G (\mathcal O)$. Each $\lambda \in \Lambda$ defines a homomorphism $\mathcal F^* \longrightarrow H(\mathcal F)$. We will denote the image of $t$ under this homomorphism by
$t^\lambda$, and its image in $\mathrm{Gr}$ will also be denoted by $t^\lambda$. We set
\[ S^\lambda = \widehat U(\mathcal F) \cdot t^\lambda \subset \mathrm{Gr} \quad \text{ and } \quad T^\lambda = U_-(\mathcal F) \cdot t^\lambda \subset \mathrm{Gr}. \]

In a recent paper \cite{BFK}, Braverman, Finkelberg and Kazhdan defined the generating function
\begin{equation} \label{eqn-Ig}
I_{\mathfrak g} (q)= \sum_{\gamma \in \Lambda^+} \#(T^{-\gamma} \cap S^0) q^{-|\gamma|} \zz^{\gamma}, \end{equation} and computed this sum as a product using a geometric method. Now we assume that the set of positive coroots $R^+$ forms a root system of type $A_n^{(1)}$, and we identify $R^+$ with the set of positive roots $\Delta^+$ in the previous sections of this paper. In this case, the resulting product in \cite{BFK} is
\[ I_{\mathfrak g} (q)= \prod_{i=1}^n \prod_{j=1}^\infty \frac{1-q^{-i}\zz^{j\delta}}{1-q^{-(i+1)}\zz^{j\delta}}
\prod_{\alpha\in\Delta^+} \left ( \frac{1-q^{-1}\zz^\alpha}{1-\zz^\alpha} \right )^{\mult(\alpha)}. \] We separate the factor
\[ \prod_{i=1}^n \prod_{j=1}^\infty \frac{1-q^{-i}\zz^{j\delta}}{1-q^{-(i+1)}\zz^{j\delta}}, \] and call it the {\em correction factor}. Our goal of this section is to write this correction factor and the function $I_{\mathfrak g}(q)$ as sums over reduced proper generalized Young walls.

\medskip

Let $\YY_0$ denote the subset of $\YY(\infty)$ consisting of the reduced proper generalized Young walls with empty rows in positions $\equiv 0 \bmod n+1$.  We define a map $\xi\colon \mathcal P(n) \longrightarrow \YY_0$ by the following assignment.  If $\bm p=(\rho^{(1)}, \dots , \rho^{(n)})$ is a multi-partition, then the parts of the partition $\rho^{(j)}$ give the lengths of the rows $\equiv j \bmod n+1$ in a reduced proper generalized Young wall $\xi(\bm p) = Y\in \YY_0$.  The following is clear from the definition.

\begin{lemma} \label{lem-xi}
The map $\xi\colon \PP(n) \longrightarrow \YY_0$ defined above is a bijection.
\end{lemma}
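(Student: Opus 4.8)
The plan is to show that $\xi$ is a well-defined map landing in $\YY_0$ and then exhibit an explicit two-sided inverse, which makes the bijectivity transparent. First I would verify well-definedness: given a multi-partition $\bm p=(\rho^{(1)},\dots,\rho^{(n)})$, the prescription assigns the parts of $\rho^{(j)}$ as the lengths of the rows in positions $\equiv j \bmod n+1$, and leaves all rows in positions $\equiv 0 \bmod n+1$ empty. I must check that arranging these row-lengths in weakly decreasing order within each congruence class produces a \emph{proper} generalized Young wall (immediate, since properness is exactly the condition that rows in the same congruence class are weakly decreasing) and a \emph{reduced} one. Reducedness is where a small argument is needed: a removable $\delta$ would require $n+1$ nonempty rows of equal length in pairwise non-congruent positions (by the same reasoning as in the proof of Lemma \ref{lem-Yp}); but since every row $\equiv 0 \bmod n+1$ is empty, no such collection spanning all $n+1$ residue classes can occur, so $\xi(\bm p)\in\YY_0$.

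Next I would construct the inverse map $\xi^{-1}\colon \YY_0 \longrightarrow \PP(n)$ directly: given $Y\in\YY_0$, for each $j$ with $1\le j\le n$ read off the multiset of lengths $N_{(n+1)m+j}(Y)$ for $m\ge 0$ and record them as a partition $\rho^{(j)}$ (discarding zeros). Because $Y\in\YY_0$ has all rows $\equiv 0 \bmod n+1$ empty, no data is lost by ignoring that congruence class, so we obtain a genuine $n$-component multi-partition. The two composites are then checked to be the identity: $\xi^{-1}\circ\xi$ returns the original partitions since $\xi$ merely sorts each $\rho^{(j)}$ into weakly decreasing order (which does not change it as a partition), and $\xi\circ\xi^{-1}$ reconstructs $Y$ because a reduced proper generalized Young wall in $\YY_0$ is uniquely determined by its multiset of row-lengths in each congruence class (the proper arrangement being unique, exactly as observed in the proof of Proposition \ref{prop-bi}).

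The main obstacle, though a mild one, is the bookkeeping that makes the correspondence genuinely bijective rather than merely surjective: one must confirm both that empty rows carry no information (so that restricting to $1\le j\le n$ captures everything) and that within a fixed residue class the order of rows is forced, so that distinct multi-partitions cannot yield the same wall and every wall in $\YY_0$ arises. Both points are already implicit in the uniqueness-of-arrangement argument used for $\Phi$ and $\Psi$ in Proposition \ref{prop-bi}, so I would simply invoke that reasoning rather than repeat it. Since the statement is flagged as ``clear from the definition,'' I expect the proof to be quite short, essentially a remark that $\xi$ sorts multi-partition data into the unique proper reduced wall with empty $0$-rows and that this process is manifestly reversible.
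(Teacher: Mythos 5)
Your proposal is correct, and it is essentially the paper's (non-)argument made explicit: the paper offers no proof at all, stating only that the lemma is ``clear from the definition,'' and your writeup simply unpacks why it is clear --- well-definedness (properness from sorting, reducedness because a removable $\delta$ needs a nonempty row of length $k$ in \emph{every} residue class mod $n+1$, including the class $0$ rows which are empty in $\YY_0$), together with the evident inverse that reads off row lengths in each residue class. In particular, your reducedness check via Lemma \ref{lem-Yp} and the uniqueness-of-arrangement point from Proposition \ref{prop-bi} are exactly the facts the authors were implicitly relying on.
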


\begin{ex}\label{multiY}
Let $n=2$.  If
\[
\bm p = \left(\, \yng(5,2,1)\,,\, \yng(3,2,2)\, \right),
\]
then the corresponding element in $\YY_0$ is
\[
Y=\xi(\bm p) = \wall{{0,2,1,0,2},{1,0,2},{},{0,2},{1,0},{},{0}, {1,0}} \ .
\]
\end{ex}

\medskip

For $Y \in \YY_0$, define 
\[
\M(Y) = \sum_{i=1}^{n} (i+1) M_i(Y),
\]
where $M_i(Y)$ is the number of nonempty rows $\equiv i \bmod n+1$ in $Y$.  Moreover, we define $|Y|$ to be the total number of blocks in $Y$.

\begin{ex}
Let $Y$ be as in Example \ref{multiY}.  Then
\begin{gather*}
\M(Y) = 2\cdot3 + 3\cdot 3 = 15 \ \text{ and }\  |Y| = 15.
\end{gather*}
\end{ex}

Let us consider $\N(Y)$ for $Y \in \YY_0$, where $\N(Y)$ is defined in \eqref{d-def}. Since $Y$ has empty rows in positions $\equiv 0$ mod $n+1$, we have $(p_m, q_m)=(0,0)$ for all $m \ge 0$, and obtain $\mathscr Q(Y)=\{ 0 \}$ and $\mathscr P(Y) = 0$. Hence we have 
\begin{equation} \label{eqn-nn} \N(Y)= \sum_{j=1}^n \#(S_j(Y) \setminus \{ 0 \})  \qquad \text{ for } Y \in \YY_0 .
\end{equation}

\begin{prop}\label{correction}
Let $\mathfrak g$ be an affine Kac-Moody algebra of type $A_n^{(1)}$.  Then
\[
\prod_{i=1}^n \prod_{j=1}^\infty \frac{1-q^{-i}\zz^{j\delta}}{1-q^{-(i+1)}\zz^{j\delta}} = \sum_{Y\in \YY_0} (1-q)^{\N(Y)} q^{-\M(Y)}\zz^{|Y|\delta}.
\]
\end{prop}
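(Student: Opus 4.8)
The plan is to establish the identity by expanding the product side into a sum and then matching terms with the combinatorial sum over $\YY_0$ via the bijection $\xi\colon \PP(n) \longrightarrow \YY_0$ from Lemma \ref{lem-xi}. First I would expand each factor of the product as a geometric-type series. For a fixed $i$ and $j$, the factor $\frac{1-q^{-i}\zz^{j\delta}}{1-q^{-(i+1)}\zz^{j\delta}}$ should be rewritten; since $\frac{1-ax}{1-bx} = 1 + (b-a)x + (b-a)bx^2 + \cdots = 1 + (b-a)\sum_{m\ge 1} b^{m-1}x^m$ with $a=q^{-i}$, $b=q^{-(i+1)}$, and $x=\zz^{j\delta}$, one obtains a series whose coefficient of $\zz^{mj\delta}$ records a power of $(1-q)$-type weight together with a power of $q^{-1}$. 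The key is to bookkeep, across all $i$ and $j$, exactly how the exponents of $(1-q)$ and of $q^{-1}$ accumulate and to see that they are governed respectively by $\N(Y)$ and $\M(Y)$ for the corresponding $Y\in\YY_0$.

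Next I would reorganize the combinatorial side. By Lemma \ref{lem-xi}, summing over $Y\in\YY_0$ is the same as summing over multi-partitions $\bm p=(\rho^{(1)},\dots,\rho^{(n)})\in\PP(n)$, where $\rho^{(j)}$ records the lengths of the rows $\equiv j \bmod n+1$. Under this identification I would translate the three statistics: the total number of blocks $|Y|$ equals $\sum_j |\rho^{(j)}|$ so that $\zz^{|Y|\delta}$ factors as a product over the components; the statistic $\M(Y)=\sum_{i=1}^n (i+1)M_i(Y)$ factors since $M_i(Y)$ is just the number of nonempty rows $\equiv i$, i.e.\ the number of distinct part-sizes contributing, which ties to the index $i$ in the $q^{-(i+1)}$ appearing in the product; and by \eqref{eqn-nn}, $\N(Y)=\sum_{j=1}^n \#(S_j(Y)\setminus\{0\})$ is exactly $\sum_j \N(\rho^{(j)})$, the total number of distinct part sizes. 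The upshot is that everything factors as a product over the $n$ components indexed by $j=1,\dots,n$, and within each component the sum over the partition $\rho^{(j)}$ should reproduce the inner product $\prod_{j'}$ over the imaginary-root copies.

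Then I would verify the single-component identity: for each fixed color index $i\in\{1,\dots,n\}$ (matching the outer product index), summing $(1-q)^{\N(\rho)}q^{-(i+1)\,\#\{\text{rows}\}}\zz^{|\rho|\delta}$ over all partitions $\rho$ with parts of a fixed residue should equal $\prod_{j=1}^\infty \frac{1-q^{-i}\zz^{j\delta}}{1-q^{-(i+1)}\zz^{j\delta}}$. Concretely, writing $\rho=(1^{m_1}2^{m_2}\cdots)$, the contribution is a product over distinct part-sizes $r$ with $m_r\neq 0$, and each such factor contributes one power of $(1-q)$ (from $\N$) and the block-count and row-count data; summing the per-$r$ contributions over $m_r\ge 0$ recovers the geometric series computed in the first step. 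The main obstacle I anticipate is the careful bookkeeping that reconciles the two indexing conventions: the product is indexed by $(i,j)$ with $i$ ranging over $q$-powers and $j$ over imaginary-root copies, whereas the combinatorial side is indexed by colors $j$ and by part-sizes, so establishing the precise correspondence between the exponent $i+1$ in $\M(Y)$ and the coefficient $\#(S_j(Y)\setminus\{0\})$ in $\N(Y)$—and confirming that the $(1-q)$ versus $(1-q^{-1})$ and the signs of the $q$-exponents line up—will require the most attention. Once the single-component identity and the factorization are in place, multiplying over all $n$ components and invoking the bijection $\xi$ completes the proof.
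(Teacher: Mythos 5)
Your plan follows essentially the same route as the paper's proof: expand each factor as $1+\sum_{k\ge 1}(1-q)\,q^{-k(i+1)}\zz^{kj\delta}$, recognize the full product as a sum over multi-partitions in which $(1-q)$ is counted once per distinct part size and $q^{-(i+1)}$ once per part, and then transport the three statistics $\N$, $\M$, $|\cdot|$ to $\YY_0$ via the bijection $\xi$ of Lemma \ref{lem-xi}. One wording slip to fix: $M_i(Y)$ is the number of nonempty rows $\equiv i$ (i.e.\ parts counted \emph{with} multiplicity), not ``the number of distinct part-sizes'' --- but your Step 3 bookkeeping ($(1-q)^{\N}$ per distinct size, $q^{-(i+1)}$ per row) uses the correct quantities, so the argument goes through exactly as in the paper.
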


\begin{proof}
We have 
\begin{align*} 
\prod_{j=1}^\infty \frac{1-q^{-i}\zz^{j\delta}}{1-q^{-(i+1)}\zz^{j\delta}} 
&=  \prod_{j=1}^\infty \left ( 1 + \sum_{k=1}^\infty (1 - q) q^{-k(i+1)} \zz^{kj\delta} \right ) \\
&= \sum_{\rho^{(i)} \in \PP(1)} (1-q)^{\N(\rho^{(i)})} q^{-(i+1)M(\rho^{(i)})} \zz^{|\rho^{(i)}|\delta}, 
\end{align*}
where $\N(\rho^{(i)}) = \#\{ r : m_r \neq 0 \}$ and $|\rho^{(i)}|=m_1+2m_2+ \cdots$ are defined in \eqref{eqn-q} and we set $M(\rho^{(i)})= m_1+ m_2+ \cdots$ for $\rho^{(i)}=(1^{m_1}2^{m_2} \cdots) \in \PP(1)$. For a multi-partition $\rho=(\rho^{(1)}, \dots , \rho^{(n)}) \in \PP(n)$, define \[\N(\rho)= \sum_{i=1}^n \N(\rho^{(i)}), \quad 
|\rho|= \sum_{i=1}^n |\rho^{(i)}| \quad \text{ and } \quad \M(\rho)= \sum_{i=1}^n (i+1)M(\rho^{(i)}) . \] 
Then we have 
\begin{align} 
\prod_{i=1}^n \prod_{j=1}^\infty \frac{1-q^{-i}\zz^{j\delta}}{1-q^{-(i+1)}\zz^{j\delta}} 
&= \prod_{i=1}^n \sum_{\rho^{(i)} \in \PP(1)} (1-q)^{\N(\rho^{(i)})} q^{-(i+1)M(\rho^{(i)})} \zz^{|\rho^{(i)}|\delta} \nonumber \\ 
&= \sum_{\rho \in \PP(n)} (1-q)^{\N(\rho)} q^{-\M(\rho)} \zz^{|\rho|\delta} . \label{ww}
\end{align}
Using the map $\xi$ in Lemma \ref{lem-xi}, one can see that $\N(\rho)=\N(\xi(\rho))$, $\M(\rho)=\M(\xi(\rho))$ and $|\rho|= |\xi(\rho)|$ for $\rho \in \PP(n)$, and the proposition follows from \eqref{ww}.
\end{proof}

The following formula  provides a combinatorial description of the affine Gindikin-Karpelevich formula proved by Braverman, Finkelberg and Kazhdan.

\begin{cor} \label{cor-end}
When ${\mathfrak g}$ is an affine Kac-Moody algebra of type
$A_{n}^{(1)}$, we have
\begin{multline} \label{mul}
I_{\mathfrak g}(q)=\prod_{i=1}^n \prod_{j=1}^\infty \frac{1-q^{-i}\zz^{j\delta}}{1-q^{-(i+1)}\zz^{j\delta}}
\prod_{\alpha\in\Delta^+} \left ( \frac{1-q^{-1}\zz^\alpha}{1-\zz^\alpha} \right )^{\mult(\alpha)} \\
=
\sum_{(Y_1,Y_2) \in \YY(\infty)\times\YY_0}
(1-q^{-1})^{\N(Y_1)}(1-q)^{\N(Y_2)}q^{-\M(Y_2)}\zz^{-\wt(Y_1)+|Y_2|\delta}.
\end{multline}
\end{cor}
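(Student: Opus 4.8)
The plan is to obtain \eqref{mul} by simply multiplying together the two generating-function identities already established, namely Theorem \ref{main} for the Gindikin-Karpelevich product and Proposition \ref{correction} for the correction factor. The first equality in \eqref{mul} requires no proof: it is precisely the product formula for $I_{\mathfrak g}(q)$ obtained by Braverman, Finkelberg and Kazhdan and recalled at the beginning of this section, which factors $I_{\mathfrak g}(q)$ as the correction factor times $\prod_{\alpha \in \Delta^+}\bigl((1-q^{-1}\zz^\alpha)/(1-\zz^\alpha)\bigr)^{\mult(\alpha)}$. So the entire content lies in the second equality.

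For the second equality I would substitute the right-hand sides of Proposition \ref{correction} (into the correction factor) and Theorem \ref{main} (into the Gindikin-Karpelevich product), giving
\[
I_{\mathfrak g}(q) = \Bigl( \sum_{Y_2 \in \YY_0} (1-q)^{\N(Y_2)} q^{-\M(Y_2)} \zz^{|Y_2|\delta} \Bigr) \Bigl( \sum_{Y_1 \in \YY(\infty)} (1-q^{-1})^{\N(Y_1)} \zz^{-\wt(Y_1)} \Bigr).
\]
Both factors are power series supported in the positive root cone: the monomial attached to $Y_1$ is $\zz^{-\wt(Y_1)}$ with $-\wt(Y_1)\in Q^+$, and that attached to $Y_2$ is $\zz^{|Y_2|\delta}$ with $|Y_2|\delta\in Q^+$. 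For any fixed total degree $\beta\in Q^+$ only finitely many $Y_1$ and finitely many $Y_2$ contribute, since the number of boxes is bounded by the degree; hence the distributive law applies and the product is exactly the sum over the Cartesian product $\YY(\infty)\times\YY_0$, with the two index variables ranging independently. It is worth emphasizing that although $\YY_0\subset\YY(\infty)$, the two sums are indexed separately, so there is no collapsing or double-counting; and that the monomial attached to $Y_2$ is the one dictated by Proposition \ref{correction}, namely $q^{-\M(Y_2)}\zz^{|Y_2|\delta}$, which is in general unrelated to $\zz^{-\wt(Y_2)}$.

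The final step is to read off the summand of the product as the product of the two summands, collecting the $(1-q^{-1})$, $(1-q)$ and $q$ powers and adding the $\zz$-exponents to get $\zz^{-\wt(Y_1)+|Y_2|\delta}$; this is exactly the right-hand side of \eqref{mul}. There is no serious obstacle here, as the corollary is a formal consequence of the two preceding results. The only point requiring any care is the bookkeeping of the $\zz$-exponent — in particular the distinction between the $\zz^{|Y_2|\delta}$ coming from the correction factor and the weight $\zz^{-\wt(Y_2)}$ of the same wall — together with the (routine) justification that expanding the product of the two generating functions over $\YY(\infty)\times\YY_0$ is legitimate.
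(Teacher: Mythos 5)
Your proposal is correct and matches the paper's (implicit) argument exactly: the corollary is stated without proof precisely because it follows by multiplying the identity of Theorem \ref{main} with that of Proposition \ref{correction}, the first equality being the Braverman--Finkelberg--Kazhdan product formula recalled at the start of the section. Your added care about formal expansion of the product and the distinction between $\zz^{|Y_2|\delta}$ and $\zz^{-\wt(Y_2)}$ is sound bookkeeping but does not change the route.
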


Furthermore, comparing \eqref{mul} with \eqref{eqn-Ig}, we obtain a combinatorial formula for the number of points in the intersection $T^{-\gamma} \cap S^0$:
\begin{cor} \label{cor-nup}
We have
\[ 
\#(T^{-\gamma} \cap S^0) = \sum_{\substack{ (Y_1,Y_2) \in \YY(\infty)\times\YY_0 \\ -\wt(Y_1)+|Y_2|\delta = \gamma} }(1-q^{-1})^{\N(Y_1)}(1-q)^{\N(Y_2)} q^{|\gamma |-\M(Y_2)} ,
\]
where $\gamma \in \Lambda^+$ is identified with the corresponding element of the root lattice of $\mathfrak g$.
\end{cor}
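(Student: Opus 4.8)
The plan is to read off the corollary by equating the two expansions of the single generating function $I_{\mathfrak g}(q)$ furnished by \eqref{eqn-Ig} and \eqref{mul}. Both are identities of formal power series in the variable $\zz$ (with coefficients that are Laurent polynomials in $q$), so it suffices to match the coefficient of $\zz^\gamma$ for each fixed $\gamma$.

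First I would make the indexing precise. In \eqref{eqn-Ig} the sum runs over $\gamma \in \Lambda^+$, the subsemigroup of the coroot lattice generated by $R^+$, whereas in \eqref{mul} the exponents of $\zz$ have the form $-\wt(Y_1) + |Y_2|\delta$. Under the identification of $R^+$ with $\Delta^+$ fixed at the start of this section, $\Lambda^+$ is identified with the positive cone of the root lattice. Since $-\wt(Y_1) = \sum_i m_i(Y_1)\alpha_i$ is a nonnegative integer combination of simple roots and $|Y_2|\delta$ is a nonnegative multiple of $\delta$, every exponent $-\wt(Y_1)+|Y_2|\delta$ lies in this cone, and conversely each such $\gamma$ arises; thus the two families of $\zz$-exponents coincide. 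Under this identification the quantity $|\gamma|=\sum a_i$ (for $\gamma=\sum a_i\alpha_i^\vee$) becomes the height of $\gamma$ read off in the root lattice.

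Next I would extract coefficients. From \eqref{eqn-Ig} the coefficient of $\zz^\gamma$ is $\#(T^{-\gamma}\cap S^0)\,q^{-|\gamma|}$. From \eqref{mul} the coefficient of $\zz^\gamma$ is the sum over exactly those pairs $(Y_1,Y_2)\in\YY(\infty)\times\YY_0$ with $-\wt(Y_1)+|Y_2|\delta=\gamma$, namely
\[
\sum_{\substack{(Y_1,Y_2)\\ -\wt(Y_1)+|Y_2|\delta=\gamma}} (1-q^{-1})^{\N(Y_1)}(1-q)^{\N(Y_2)}q^{-\M(Y_2)}.
\]
Equating the two coefficients and multiplying through by $q^{|\gamma|}$ produces the asserted formula. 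This coefficient extraction is legitimate because each $\zz^\gamma$-coefficient is a finite sum: for fixed $\gamma$, the finiteness of the weight spaces of $\BB(\infty)\cong\YY(\infty)$ bounds the number of admissible $Y_1$, and the constraint $|Y_2|\delta\le\gamma$ bounds the number of admissible $Y_2\in\YY_0$.

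The only point requiring care—and the step I expect to be the main, if modest, obstacle—is the bookkeeping of the previous paragraphs: one must confirm that the identification $R^+\leftrightarrow\Delta^+$ is compatible enough that $|\gamma|$, computed via $\sum a_i$ in the coroot lattice, agrees with the height read off the $\zz$-exponent appearing in \eqref{mul}, so that the factor $q^{-|\gamma|}$ on the left transfers correctly to the exponent $q^{|\gamma|-\M(Y_2)}$ on the right. Everything else is a purely formal comparison of coefficients of two power-series expressions for the same function $I_{\mathfrak g}(q)$.
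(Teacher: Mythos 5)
Your proposal is correct and follows essentially the same route as the paper, which obtains the corollary precisely by comparing the coefficient of $\zz^\gamma$ in the expansion \eqref{eqn-Ig} of $I_{\mathfrak g}(q)$ with that in \eqref{mul} and clearing the factor $q^{-|\gamma|}$. The extra bookkeeping you supply (the identification of $\Lambda^+$ with the positive cone of the root lattice and the finiteness of each $\zz^\gamma$-coefficient) is implicit in the paper's one-line derivation, so there is no substantive difference in approach.
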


\begin{ex}
Assume $n=1$ and $\gamma=\delta$. Then we have 
\[ (Y_1, Y_2) = \left 
(\varnothing \ ,
	\begin{tikzpicture}[baseline,scale=.45]
		\GYW{{0}}
	\end{tikzpicture} \ 
\right ), \quad  
\left (
	\begin{tikzpicture}[baseline,scale=.45]
		\GYW{{0,1}}
	\end{tikzpicture}
\ , \varnothing \right ), 
\quad \text{or} \quad 
\left (
	\begin{tikzpicture}[baseline=10,scale=.45]
		\GYW{{},{1,0}}
	\end{tikzpicture}
\ , \varnothing \right ). 
 \]
From the first pair, we get $(1-q^{-1})^0(1-q)^1 q^{2-2}=1-q$. The second yields $(1-q^{-1})^1(1-q)^0q^{2-0}=q^2-q$, and the third $(1-q^{-1})^2 (1-q)^0 q^{2-0} = (q-1)^2$. Thus we have
\[ \#(T^{-\gamma} \cap S^0) = 1-q + q^2-q + (q-1)^2 = 2(q-1)^2 .\]
\end{ex}

\appendix
\section{Implementation in Sage}

Together with Lucas Roesler and Travis Scrimshaw, the fourth named author has implemented generalized Young walls and the statistics developed here in the open-source mathematical software Sage \cite{combinat,sage}.  We conclude with some examples using our package.

First we may verify examples given above.  To verify Example \ref{ex-nosplit}, we have the following, where \lstinline!Y.number_of_parts()! refers to $\N(Y)$. 

\begin{lstlisting}
sage: Yinf = InfinityCrystalOfGeneralizedYoungWalls(2)
sage: Y = Yinf([[0,2,1,0,2],[1,0,2],[2],[],[1]])
sage: Y.pp()
         1|
          |
         2|
     2|0|1|
 2|0|1|2|0|
sage: Y.number_of_parts()
4
\end{lstlisting}

Similarly, to see Examples \ref{ex-split} and \ref{ex-splitalg} using Sage, use the following commands.

\begin{lstlisting}
sage: Yinf = InfinityCrystalOfGeneralizedYoungWalls(2)
sage: row1 = [0,2,1]
sage: row2 = [1,0]
sage: row3 = [2,1,0,2,1,0,2,1,0]
sage: row6 = [2,1,0,2,1,0]
sage: Y = Yinf([row1,row2,row3,[],[],row6])
sage: Y.pp()
       0|1|2|0|1|2|
                  |
                  |
 0|1|2|0|1|2|0|1|2|
               0|1|
             1|2|0|
sage: Y.number_of_parts()
8
sage: Y.pp()
       0|1|2|0|1|2|
                  |
                  |
 0|1|2|0|1|2|0|1|2|
               0|1|
             1|2|0|
sage: Y.number_of_parts()
8
\end{lstlisting}

Note that the remaining crystal structure pertaining to generalized Young walls has also been implemented.  We continue using the $Y$ from the previous example.

\begin{lstlisting}
sage: Y.weight()
-7*alpha[0] - 7*alpha[1] - 6*alpha[2]
sage: Y.f(1).pp()
       0|1|2|0|1|2|
                 1|
                  |
 0|1|2|0|1|2|0|1|2|
               0|1|
             1|2|0|
sage: Y.e(0).pp()
         1|2|0|1|2|
                  |
                  |
 0|1|2|0|1|2|0|1|2|
               0|1|
             1|2|0|
sage: Y.content()
20
\end{lstlisting}

One may also generate the top part of the crystal graph.  

\begin{lstlisting}
sage: Yinf = InfinityCrystalOfGeneralizedYoungWalls(2)
sage: S = Yinf.subcrystal(max_depth=4)
sage: G = Yinf.digraph(subset=S)
sage: view(G,tightpage=True)
\end{lstlisting}

We conclude by mentioning that highest weight crystals realized by generalized Young walls have also been implemented in Sage, following Theorem 4.1 of \cite{KS:10}.

\begin{lstlisting}
sage: Delta = RootSystem(['A',3,1])
sage: P = Delta.weight_lattice()
sage: La = P.fundamental_weights()
sage: YLa = CrystalOfGeneralizedYoungWalls(3,La[0])
sage: S = YLa.subcrystal(max_depth=6)
sage: G = YLa.digraph(subset=S)
sage: view(G,tightpage=True)
\end{lstlisting}

\bibliography{GK-A-affine}{}
\bibliographystyle{amsplain}
\end{document}